\documentclass[11pt]{amsart}

\usepackage{amsmath}
\usepackage{amssymb}
\usepackage{amsthm}

\usepackage{times}
\usepackage[utf8]{inputenc}
\usepackage[T1]{fontenc}
\usepackage{tikz-cd}

\newtheorem{thm}{Theorem}[section]
\newtheorem{cor}[thm]{Corollary}
\newtheorem{lem}[thm]{Lemma}
\newtheorem*{thm*}{Theorem}

\newtheorem{prob}[thm]{Question}
\newtheorem{prop}[thm]{Proposition}

\theoremstyle{definition}
\newtheorem{defin}[thm]{Definition}

\DeclareMathOperator{\acc}{acc}
\DeclareMathOperator{\dens}{dens}

\DeclareMathOperator{\kappatree}{^{<\omega}\kappa}
\DeclareMathOperator{\Bool}{Bool}
\DeclareMathOperator{\BoolSeq}{BoolSeq}
\DeclareMathOperator{\B}{\mathcal B}
\DeclareMathOperator{\K}{\mathcal K}
\DeclareMathOperator{\E}{\mathcal E}

\newcommand{\aut}{\operatorname{Aut}}
\newcommand{\blim}{\mathbb{B}_\lambda}
\newcommand{\rg}{\operatorname{rg}}
\newcommand{\dom}{\operatorname{dom}}
\newcommand{\s}{\subseteq}
\newcommand{\col}{\operatorname{Coll}}
\newcommand{\coll}{\operatorname{Coll}(\omega,\kappa)}

\newcommand{\collkappa}{\operatorname{Coll}(\omega,\kappa)}
\newcommand{\colllambda}{\operatorname{Coll}(\omega,<\lambda)}

\title{Homogeneity of the L\'evy collapse from the perspective of Fra\"iss\'e theory}
\author{Ziemowit Kostana}
\address{Institute of Mathematics of the Czech Academy of Sciences, \v{Z}itn\'{a} 25, 115~67 Prague 1, Czech Republic}
\address{Wroc\l{}aw University of Science and Technology, Wybrze\c{z}e Wyspia\'nskiego 27, 50~370 Wroc\l{}aw, Poland}
\email{ziemowit.kostana@pwr.edu.pl}

\begin{document}

\maketitle

\begin{abstract}
Given a strongly inaccessible cardinal $\lambda$, we study the Fra\"iss\'e class of all Boolean algebras of size $<\lambda$, together with regular embeddings. We prove that this is indeed a Fra\"iss\'e class, and its limit has the same completion as the L\'evy collapse. We also give a direct proof that the collapsing algebra of density $\kappa$ is not the union of a $\kappa$-chain of regular sub-algebras of density $<\kappa$.
\end{abstract}

{\bf Keywords:} Fraisse theory, homogeneous structures, regular sub-algebras, collapsing algebra, Levy collapse 

{\bf MSC classification:} 06E10, 03C55, 03E40 

\section{Introduction}
\subsection{Fra\"iss\'e-J\'onsson theory}

The classical Fra\"iss\'e theory studies relations between certain classes of finite structures, called \emph{Fra\"iss\'e classes}, and countable structures that are homogeneous with respect to finite substructures. However, since the original paper of Fra\"iss\'e \cite{fraisse}, the theory received a large number of far-reaching generalizations. The category-theoretic framework for Fra\"iss\'e theory was introduced by Droste and G\"obel \cite{dg1} \cite{dg2}, and was further generalized by Kubi\'s \cite{kub}. The theory originally developed in the language of models has became an influential technology in general topology \cite{solecki}, topological dynamics \cite{kpt}, theory of Polish groups \cite{truss} \cite{kpt}, or functional analysis \cite{garbulinska} \cite{espana}. The article \cite{macpherson} is an excellent survey of what is known about homogeneous structures and their automorphism groups, and \cite{kechris-rosendal} contains an extensive introduction to the study of topological properties of these groups.

The theory of uncountable Fra\"iss\'e limits is often called \emph{Fra\"iss\'e-J\'onsson theory}, as J\'onsson was the first to generalize the original work of Fra\"iss\'e to the uncountable case. Perhaps the most remarkable instance of Fra\"iss\'e-J\'onsson theory is Parovi\v{c}enko Theorem, stating that under Continuum Hypothesis $\mathcal{P}(\omega)/\operatorname{Fin}$ is the unique Boolean algebra of size continuum that has Strong Countable Separation Property \cite{parovicenko}.

A streamlined, model-theoretic exposition of the classical Fra\"iss\'e theory can be found in \cite{hodges}.

\subsection{Collapsing algebras}

Given an uncountable cardinal $\kappa$, the \emph{collapsing algebra} $\coll$ is the unique complete Boolean algebra of density $\kappa$ that collapses $\kappa$ to a countable ordinal. It has many interesting properties. For example, by a theorem of Solovay, it is countably generated as a complete Boolean algebra (\cite{bool}, p.191), showing that countably generated complete Boolean algebras can be arbitrarily large. Also, every Boolean algebra embeds into some collapsing algebra as a regular sub-algebra. Any collapsing algebra can be characterized up to isomorphism by certain non-distributivity condition. This characterization, and some further properties can be found in \cite{bool}.

\subsection{L\'evy collapse}

The \emph{L\'evy collapse} is a classical forcing notion, used for collapsing strongly inaccessible cardinals to "small" cardinals, like $\omega_1$ or $\omega_2$. One of the most spectacular effects of L\'evy-collapsing a strongly inaccessible cardinal to $\omega_1$ is making all projective sets Lebesgue-measurable \cite{b-j}. Collapsing a strongly inaccessible cardinal to $\omega_2$ kills all Kurepa trees (\cite{silver},\cite{jech} Thm. 27.9). Somewhat less known application of the L\'evy collapse is forcing that every compact Hausdorff space of weight $\omega_1$ has size $\omega_1$ or $2^{\omega_1}$, although in this case the forcing is the product of the L\'evy collapse with the generalized Cohen forcing \cite{juhas}.

Given a strongly inaccessible cardinal $\lambda$, we study the class of all Boolean algebras of size less than $\lambda$ -- equipped with regular embeddings -- from the perspective of Fra\"iss\'e-J\'onsson theory. We denote this class by $\Bool_\lambda$. We also consider the class $\BoolSeq_\lambda$, consisting of all Boolean algebras that are unions of increasing $\lambda$-chains of regular sub-algebras. For precise definitions, see Definition \ref{classes}. The main results of the paper are:
\begin{enumerate}
    \item[A.] $\Bool_\lambda$ is a Fra\"iss\'e class, and its limit has the same completion as the L\'evy collapse of $\lambda$ to $\omega_1$. 
    \item[B.] The automorphism group of the L\'evy collapse is a universal group in the class of groups $\aut(B)$, for $B \in \BoolSeq_\lambda$. 
\end{enumerate}
We also give a new proof of the fact that $\coll$ is not in the class $\BoolSeq_\kappa$. This fact itself is rather standard, but the argument relies on the preservation of $\kappa$-chain condition by finite support iterations. We give a direct proof that still might be of some interest.

\section{Main definitions and notation}

The paper is self-contained with respect to Fra\"iss\'e theory. Formulations of most of the theorems base only on basic model-theoretic notions, however proofs often rely on forcing arguments. All relevant notions from the theory of Boolean algebras can be found in the following short introduction, or in \cite{bool}.

\subsection{Partial orders}

All partial orders under consideration are separative. For a partial order $P$, we denote by $\overline P$ the \emph{completion} of $P$ -- the unique complete Boolean algebra containing $P$ as a dense subset. The completion of $P$ can be realized as the Boolean algebra of regular open subsets of $P$, in some specific topology. The former universal property is however more relevant, than what the elements of $\overline P$ are specifically.

\subsection{Regular Boolean sub-algebras}

A subset $R \s A$ of a Boolean algebra $A$ is \emph{dense} if for every $a \in A\setminus \{0\}$ there exists $r \in R\setminus \{0\}$ such that $r \leq a$. The least size of a dense subset of $A$ is called the \emph{density} of $A$, and denoted $\dens(A)$.

A sub-algebra $A\subseteq B$ is \emph{regular} if every maximal antichain in $A$ is also a maximal antichain in $B$. We write $A \sqsubseteq B$. An embedding of Boolean algebras is called a \emph{regular embedding} if its image is a regular sub-algebra. Regular embeddings are similar to \emph{complete embeddings}. An embedding $e:A\longrightarrow B$ is \emph{complete} if $A$ is a complete algebra, and for each subset $S \subseteq A$
$$\bigvee e[S]= e[\bigvee S], \; \bigwedge e[S]=e[\bigwedge S].$$

The difference is that the expressions $\bigvee S$ and $\bigwedge S$ might be undefined in non-complete algebras. We define a regular suborder of a partial order the same way, and also write $P \sqsubseteq R$. We will be frequently applying the following simple and well-known consequence of the Sikorski's Extension Criterion.

\begin{prop}\label{extension}
	Any regular embedding $e:A\longrightarrow B$ extends uniquely to a complete embedding
	$\bar{e}:\overline{A}\longrightarrow \overline{B}$.
\end{prop}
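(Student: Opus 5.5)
The plan is to define $\bar e$ explicitly and then verify it is a complete embedding. Uniqueness is settled by density of $A$ in $\overline{A}$, so existence is the real content. Regard $A \subseteq \overline{A}$ and $B\subseteq \overline{B}$ as dense subalgebras. For $x \in \overline{A}$ put
$$\bar e(x) = \bigvee^{\overline{B}} \{ e(a) : a \in A,\ a \le x\}.$$
\emph{Uniqueness:} any complete embedding $f$ extending $e$ must agree with this formula, because every $x\in\overline A$ satisfies $x=\bigvee\{a\in A: a\le x\}$ (density of $A$), and $f$ preserves arbitrary joins.

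For existence, I would invoke Sikorski's extension criterion in the following form: a homomorphism $e$ from a dense subalgebra $A$ of $\overline{A}$ into a complete algebra extends to a homomorphism $\bar e:\overline{A}\to\overline{B}$. Alternatively, I would check directly from the formula that $\bar e$ is order preserving, extends $e$, and preserves complements. The first two are immediate. The key point is to show $\bar e(-x) = -\bar e(x)$, and this is where regularity of $e$ enters; I expect it to be the main obstacle.

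\emph{Disjointness:} if $a\le x$ and $a'\le -x$ with $a,a'\in A$, then $e(a)\wedge e(a')=e(a\wedge a')=0$, so $\bar e(x)\wedge\bar e(-x)=0$. \emph{Covering:} choose by Zorn's lemma a maximal antichain $M\subseteq A$ each of whose elements lies below $x$ or below $-x$. Such an $M$ is maximal in $A$, since by density every nonzero $a\in A$ has a nonzero $A$-element below $a\wedge x$ or below $a\wedge -x$. Regularity says $e[M]$ is a maximal antichain in $e[A]$-image viewed in $B$, hence $\bigvee e[M]=1$ in $\overline{B}$. Consequently $\bar e(x)\vee\bar e(-x)=1$.

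Once complements are preserved, preservation of arbitrary joins follows: monotonicity gives $\bar e(\bigvee S)\ge\bigvee \bar e[S]$. Conversely, if $b=\bar e(\bigvee S)\wedge -\bigvee\bar e[S]\neq 0$, then some $e(a)\le b$ with $0\ne a\le\bigvee S$. Then $a\wedge s\ne0$ for some $s\in S$, so there is a nonzero $a'\in A$ with $a'\le a\wedge s$, and $e(a')\le b\wedge \bar e(s)=0$, contradicting injectivity of $e$. Meets then follow by De Morgan. Finite meets are preserved: for $a\le x\wedge y$, $e(a)\le\bar e(x)\wedge\bar e(y)$; for the converse, use complement preservation together with joins. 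Injectivity holds since $x\ne0$ contains some nonzero $a\in A$ and $e(a)\ne0$. Hence $\bar e$ is a complete embedding.
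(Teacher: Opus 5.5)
Your uniqueness argument, the disjointness and covering steps that give $\bar e(-x)=-\bar e(x)$, and injectivity are all correct. The gap is in the step that is supposed to make $\bar e$ \emph{complete}, which is the crux of the statement. From $0\neq b\le\bar e(\bigvee S)=\bigvee\{e(a): a\in A,\ a\le\bigvee S\}$ you can only infer $e(a)\wedge b\neq 0$ for some such $a$. You cannot infer $e(a)\le b$, because $e[A]$ need not be dense in $\overline{B}$. With only $e(a)\wedge b\neq 0$ there is no contradiction, since $e(a')$ may lie in the part of $e(a)$ that is disjoint from $b$.

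Notice that this paragraph of your argument uses neither regularity nor complement preservation, and it cannot succeed without regularity. Fix a nonprincipal ultrafilter $U$ on $\omega$ and define $e:\mathcal{P}(\omega)\to\mathcal{P}(\omega)\times\{0,1\}$ by $e(a)=(a,1)$ for $a\in U$ and $e(a)=(a,0)$ otherwise. Both algebras are complete, so $\bar e=e$ is an injective homomorphism; in particular it preserves complements. Yet $\bigvee_n\bar e(\{n\})=(\omega,0)\neq(\omega,1)=\bar e(\omega)$. Here $b=(\emptyset,1)$, and no $e(a)$ with $a\neq 0$ lies below it. So ``once complements are preserved, preservation of arbitrary joins follows'' is false. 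For the same reason, your Sikorski alternative, which only produces \emph{some} homomorphic extension, does not yield completeness without a further use of regularity.

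The repair is to apply regularity once more, using the device from your covering step. Given $S\subseteq\overline{A}$ and $y=\bigvee S$, use Zorn's lemma to choose an antichain $M\subseteq A\setminus\{0\}$ maximal among those whose members each lie below some $s\in S$ or below $-y$. Then $M$ is a maximal antichain of $A$. Indeed, a nonzero $a\in A$ meets $-y$ or meets some $s\in S$, since $a\wedge y=\bigvee_{s\in S}(a\wedge s)$. Density then gives a nonzero element of $A$ below $a\wedge -y$ or below $a\wedge s$, and by maximality this element must meet a member of $M$. By regularity, $\bigvee e[M]=1$ in $\overline{B}$. Each $e(m)$ lies below some $\bar e(s)$ or below $\bar e(-y)$, so $\bigvee\bar e[S]\vee\bar e(-y)=1$. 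Combined with your disjointness $\bar e(y)\wedge\bar e(-y)=0$, this yields $\bar e(y)\le\bigvee\bar e[S]$. Your covering step is exactly the case $S=\{x\}$.

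With this fix you have a self-contained elementary proof. The paper gives no argument at all; it merely cites the statement as a well-known consequence of Sikorski's extension criterion.
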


\subsection{Forcing arguments}

In the proof of Theorem \ref{mainthm} we are using methods from the forcing theory. They are similar to those used in the proofs of Lemma 26.7 and Lemma 26.9 in \cite{jech}. 

\par For a Boolean algebra $B$, we set $B^+=B \setminus \{0\}$. In the theory of forcing, we commonly consider embeddings $$B^+ \longrightarrow B^+\times C^+,$$
where $B$ and $C$ are Boolean algebras, given by
$$b \mapsto (b,1_C).$$

This is a regular embedding of partial orders, however it does \emph{not} induce a regular embedding of Boolean algebras: even if $b$ and $b'$ are incompatible in $B$, we have 
$$(0_B,1_C)\leq (b,1_C),(b',1_C).$$

\begin{prop}
For Boolean algebras $B$ and $C$ there is a natural dense embedding $i:B^+\times C^+ \rightarrow B\oplus C$. In particular
$$\overline{B^+\times C^+}\simeq \overline{B \oplus C},$$
where $\oplus$ is the free product. 
\end{prop}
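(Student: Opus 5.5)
The plan is to define $i(b,c)=b\wedge c$, where $B$ and $C$ are identified with their canonical copies inside the free product $B\oplus C$. Three things then need checking: that $i$ lands in $(B\oplus C)^+$, that it is an order-embedding which preserves incompatibility, and that its image is dense. Once these hold, $i$ is a dense embedding of separative partial orders. The isomorphism $\overline{B^+\times C^+}\simeq\overline{B\oplus C}$ then follows from the uniqueness of the completion: $\overline{B\oplus C}$ is a complete Boolean algebra containing (an isomorphic copy of) $B^+\times C^+$ as a dense subset, and $(B\oplus C)^+$ is itself dense in $\overline{B\oplus C}$.

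The key input is the standard description of the free product. Every element of $B\oplus C$ is a finite join $\bigvee_{k<n} b_k\wedge c_k$ with $b_k\in B$, $c_k\in C$. Moreover, $b\wedge c=0$ in $B\oplus C$ if and only if $b=0$ or $c=0$. This independence property characterizes the free product. I would justify it via Stone duality: $B\oplus C$ is the clopen algebra of $\mathrm{St}(B)\times\mathrm{St}(C)$, and $b\wedge c$ corresponds to the rectangle $[b]\times[c]$, which is nonempty exactly when both factors are nonempty. Independence gives $i(b,c)\neq 0$ for nonzero $b$ and $c$.

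For monotonicity, $(b,c)\le(b',c')$ clearly gives $b\wedge c\le b'\wedge c'$. Conversely, if $b\wedge c\le b'\wedge c'$ with $b,c\neq 0$, then the rectangle $[b]\times[c]$ is contained in $[b']\times[c']$. Since the rectangle is nonempty, this forces $[b]\subseteq[b']$ and $[c]\subseteq[c']$, so $i$ reflects the order. For incompatibility, $(b\wedge c)\wedge(b'\wedge c')=(b\wedge b')\wedge(c\wedge c')$, which by independence is nonzero exactly when $b\wedge b'\neq0$ and $c\wedge c'\neq 0$. That is precisely compatibility in $B^+\times C^+$, since the product order is coordinatewise and $B^+$, $C^+$ are separative. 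For density, take a nonzero $x=\bigvee_k b_k\wedge c_k$. Some term $b_k\wedge c_k$ is nonzero, so $b_k,c_k\neq0$, and then $i(b_k,c_k)\le x$.

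The main obstacle is really just making the independence property of $B\oplus C$ precise; everything else is bookkeeping. The remark before the statement does not cause trouble here, because we work with $B^+\times C^+$ and never with $B\times C$, so the element $(0_B,1_C)$ does not occur.
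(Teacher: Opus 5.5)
Your proposal is correct and follows essentially the same route as the paper: via Stone duality the paper sends a pair of nonempty clopen sets to the clopen rectangle in $X_B\times X_C$, which is exactly your $b\wedge c=[b]\times[c]$. You also supply the verifications the paper leaves implicit: order-reflection, incompatibility, and density via finite unions of rectangles.
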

\begin{proof}
By the Stone duality we can find compact zero-dimensional spaces $X_B$ and $X_C$ such that $B$ and $C$ are algebras of clopen sets on $X_B$ and $X_C$ respectively. Evidently $$B^+=\operatorname{clop}(X_B)\setminus \{\emptyset \}$$
and
$$C^+=\operatorname{clop}(X_C)\setminus \{\emptyset \}.$$
We define the embedding 
$$i:\operatorname{clop}(X_B)^+\times \operatorname{clop}(X_C)^+ \rightarrow \operatorname{clop}(X_B\times X_C)$$
by $i(p,q)=x\times y$. 
    
\end{proof}

\begin{defin}
The \emph{canonical embedding of $B^+$  into $\overline{B \oplus C}$} is the composition
$$B^+\longrightarrow B^+\times C^+ \sqsubseteq \overline{B^+\times C^+}=\overline{B \oplus C},$$
where the first arrow maps $b$ to $(b,1_C)$.
\end{defin}
By Proposition \ref{extension}, the canonical embedding of $B^+$ into $\overline{B}$ extends uniquely to the \emph{canonical embedding of $\overline{B}$ into $\overline{B \oplus C}$}. We refer for the details to \cite{bool}.

\section{Fra\"iss\'e-J\'onsson Theory}

In this section we collect basic notions concerning Fra\"iss\'e limits, and state the main theorem connecting Fra\"iss\'e classes with Fra\"iss\'e limits. The main theorem of this section can be stated in many different levels of generality. Perhaps the most general formulation is Corollary 4.7 in \cite{kub}, while the most basic one in due to Fra\"iss\'e \cite{fraisse}.  The formulation we have chosen is rather intermediate, suitable for our applications.

\subsection{Fra\"iss\'e-J\'onsson classes}

We consider a class $\K$, consisting of structures in some first-order language, together with a distinguished class of embeddings, denoted by $\E$. We assume that $\E$ contains all isomorphisms and is closed under compositions. For $A,B \in \K$ we write $A\sqsubseteq_{\E}B$ if $A\subseteq B$ and the inclusion map belongs to $\E$.

\begin{defin}
    For a limit ordinal $\delta$, an $\sqsubseteq_{\E}$-increasing sequence $\{A_\alpha \mid \alpha<\delta\}$ of $\K$-structures is \emph{continuous} if for every limit $\beta<\delta$ we have
    $$A_\beta=\bigcup\limits_{\alpha<\beta}A_\alpha.$$
    
\end{defin}

\begin{defin}
    For an uncountable cardinal $\kappa$, the class $(\K,\E)$ is \emph{$<\kappa$-closed} if whenever $\{A_\alpha \mid \alpha<\delta\}$ is a continuous sequence of $\K$-structures, and $\delta<\kappa$, then

    $$A_\delta:=\bigcup_{\alpha<\delta}A_\alpha \in \K,$$
    and $A_\alpha \sqsubseteq_{\E} A_\delta$ for every $\alpha<\delta$.    
\end{defin}

\begin{defin} For a class $(\K,\E)$ we will say that:
	\begin{itemize}
		\item $(\K,\E)$ has the \emph{Joint Embedding Property} ($\operatorname{JEP}$), if for each $a,b \in \mathcal{K}$ there exists $\E$-embeddings of $a$ into $c$ and of $b$ into $c$, for some $c \in \K$.\\
	\begin{center}
	\begin{tikzcd}
		& a  \ar[dr]
		&
		& \\
		& 
		&c
		&
		&
		&
		\\
		& b  \ar[ur]
		&
		& 
	\end{tikzcd}
	\end{center}
		\item $(\K,\E)$ has the \emph{Amalgamation Property} ($\operatorname{AP}$), if for each pair of embeddings $f:a\longrightarrow b$, $g:a\longrightarrow c$ from $\E$ there exists $d\in \K$, together with a pair of embeddings $f':b\longrightarrow d$, $g':c\longrightarrow d$ in $\E$, such that $f'\circ f = g'\circ g$.\\
		\begin{center}
		\begin{tikzcd}
			& b  \ar[dr,dashed, "f'"]
			&
			& \\
			a \ar[ur, "f"] \ar[dr, swap, "g"] 
			& 
			&d 
			&
			&
			&
			\\
			& c  \ar[ur, dashed, swap, "g'"]
			&
			& 
		\end{tikzcd}
		\end{center}
	\end{itemize}
\end{defin}

\begin{defin}A structure $\mathbb A$ is:
	\begin{itemize}
		\item \emph{$(\K,\E)$-universal}, if for every $B \in \K$, there exists an $\E$-embedding $B \longrightarrow \mathbb A$,
		\item \emph{$(\K,\E)$-injective}, if for any
        $\E$-embedding $f:B\longrightarrow \mathbb A$, any $C \in \K$ such that $B\sqsubseteq_{\E} C$, there exists an extension of $f$ to an $\E$-embedding $\overline{f}:C\longrightarrow \mathbb A$.
        
		\begin{center}
		\begin{tikzcd}
		C\arrow[r, dashed, "\overline{f}"]
		& \mathbb A
	    \\B\arrow[u, "\sqsubseteq_{\E}"]\arrow[ur, swap, "f"]
		\end{tikzcd}
	    \end{center}
		
		\item \emph{$(\K,\E)$-homogeneous}, if any isomorphism between substructures $B_0,B_1 \sqsubseteq_{\E} \mathbb A$ can be extended to an automorphism of $\mathbb A$.
        \begin{center}
		
		\begin{tikzcd}
			B_0 \arrow[d, "\sqsubseteq_{\E}"]  \arrow[r, "\simeq"]
			& B_1 \arrow[d, "\sqsubseteq_{\E}"] 
			\\ \mathbb A \arrow[r, dashed]
			& \mathbb A
		\end{tikzcd}
		
	\end{center}

	\end{itemize}
\end{defin}

\begin{thm}[Fra\"iss\'e Theorem, \cite{fraisse},\cite{jonsson},\cite{kub}]\label{fraissethm}
Let $\lambda$ be an infinite cardinal satisfying $\lambda^{<\lambda}=\lambda$. Let $\K$ be a class of structures in a finite first-order language, and let $\E$ be a distinguished class of embeddings, closed under compositions, and containing all isomorphisms. Let $\K_\lambda$ be a sub-class of $\K$, consisting only of models of size less than $\lambda$. Assume that $(\K_\lambda,\E)$ satisfies the following properties:
\begin{enumerate}
    \item Joint Embedding Property,
    \item Amalgamation Property,
    \item Is closed under $\E$-substructures,
    \item Is $<\lambda$-closed\footnote{The class $(\K_\lambda,\E)$ satisfying $1.-4.$ is often called the \emph{Fra\"iss\'e class of the length $\lambda$}}.
\end{enumerate}

    Then there exists a structure $\mathbb A_\lambda$ that satisfies
    \begin{enumerate}
        \item[a)] $\mathbb A_\lambda=\bigcup\limits_{\alpha<\lambda}A_\alpha$,
        where 
            \begin{itemize}
                \item $A_\alpha \in \K_\lambda$, for every $\alpha<\lambda$,
                \item $A_\alpha \sqsubseteq_{\E} A_\beta$, whenever $\alpha<\beta<\lambda$,
                \item $A_\beta=\bigcup\limits_{\alpha<\beta}A_\alpha$, whenever $\beta<\lambda$ is limit.
            \end{itemize}
         \item[b)] $\mathbb A_\lambda$ is $(\K_\lambda,\E)$-universal,
         \item[c)] $\mathbb A_\lambda$ is $(\K_\lambda,\E)$-injective.
    \end{enumerate}
    Moreover, the properties $1.-4.$ properties determine the structure uniquely $\mathbb A_\lambda$ up to isomorphism, and any structure satisfying them is also $(\K_\lambda,\E)$-homogeneous.
\end{thm}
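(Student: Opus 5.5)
We plan to run the standard Fra\"iss\'e--J\'onsson argument, in three stages: existence of the chain, the back-and-forth for uniqueness, and homogeneity. The cardinal hypothesis $\lambda^{<\lambda}=\lambda$ is used only for bookkeeping.

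\textbf{Counting.} Since the language is finite, there are at most $\lambda^{<\lambda}=\lambda$ isomorphism types in $\K_\lambda$. For a fixed $A\in\K_\lambda$, there are at most $\lambda$ pairs $(B,C)$ with $B\sqsubseteq_{\E}A$ and $B\sqsubseteq_{\E}C$, $C\in\K_\lambda$, counted up to isomorphism over $B$.

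\textbf{Existence.} Fix a bookkeeping bijection $\pi:\lambda\to\lambda\times\lambda$ with $\pi(\alpha)=(\beta,\gamma)$ implying $\beta\le\alpha$. We build a continuous $\sqsubseteq_{\E}$-chain $\{A_\alpha\mid\alpha<\lambda\}$ as follows.
\begin{itemize}
\item At limit stages we take unions. These stay in $\K_\lambda$ with the correct inclusions by $<\lambda$-closedness (regularity of $\lambda$ follows from $\lambda^{<\lambda}=\lambda$).
\item At successor stages we handle two kinds of tasks, both enumerated by $\pi$.
\begin{itemize}
\item Universality task: given $D\in\K_\lambda$, use JEP to jointly embed $A_\alpha$ and $D$ into some $A_{\alpha+1}$. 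After replacing $A_{\alpha+1}$ by an isomorphic copy, we may assume the embedding of $A_\alpha$ is an inclusion.
\item Injectivity task: given the $\gamma$-th extension problem $B\sqsubseteq_{\E}A_\beta$, $B\sqsubseteq_{\E}C$ listed at stage $\beta$, amalgamate $A_\alpha$ and $C$ over $B$ using AP. This is legitimate because $B\sqsubseteq_{\E}A_\beta\sqsubseteq_{\E}A_\alpha$, and $\E$ is closed under composition.
\end{itemize}
\end{itemize}
Let $\mathbb A_\lambda=\bigcup_{\alpha<\lambda}A_\alpha$; property a) is then immediate. For b), universality comes from the JEP tasks. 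For c), let $f:B\to\mathbb A_\lambda$ be an $\E$-embedding with $|B|<\lambda$. We must check that $f[B]\subseteq A_\beta$ for some $\beta$ and that $f[B]\sqsubseteq_{\E}A_\beta$. The first follows from regularity of $\lambda$. The second is where closure under $\E$-substructures (hypothesis 3) enters: we need $\E$-substructures of $\mathbb A_\lambda$ lying inside $A_\beta$ to be $\E$-substructures of $A_\beta$. We will take this as the intended meaning of hypothesis 3, together with $A_\beta\sqsubseteq_{\E}\mathbb A_\lambda$, which itself follows from closedness/continuity. The task at the appropriate later stage then provides the extension $\bar f$.

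\textbf{Uniqueness and homogeneity.} Suppose $\mathbb A=\bigcup A_\alpha$ and $\mathbb A'=\bigcup A'_\alpha$ both satisfy a)--c). We build by transfinite recursion a continuous increasing sequence of isomorphisms $h_\xi:X_\xi\to Y_\xi$, where $X_\xi\sqsubseteq_{\E}\mathbb A$ and $Y_\xi\sqsubseteq_{\E}\mathbb A'$ both have size $<\lambda$, alternately absorbing $A_\xi$ and $A'_\xi$.
\begin{itemize}
\item Forth step: given $h_\xi$ and $A_\xi$, pick $\beta$ with $X_\xi\cup A_\xi\subseteq A_\beta$, with $X_\xi\sqsubseteq_{\E}A_\beta$ by closure. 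Apply injectivity of $\mathbb A'$ to the embedding $h_\xi:X_\xi\to\mathbb A'$ and the extension $X_\xi\sqsubseteq_{\E}A_\beta$.
\item Back step: symmetric.
\item Limits: take unions, using $<\lambda$-closedness to keep the domains and ranges $\E$-substructures.
\end{itemize}
The union of the $h_\xi$ is an isomorphism $\mathbb A\to\mathbb A'$. Homogeneity is the same back-and-forth on $\mathbb A=\mathbb A'$, starting from the given isomorphism $B_0\to B_1$. One gap remains: homogeneity as defined allows $B_0$ of arbitrary size, while the argument above needs $|B_0|<\lambda$. We read the homogeneity claim as concerning substructures in $\K_\lambda$, as in \cite{kub}.

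\textbf{Main obstacle.} The delicate point is the interaction between $\E$-substructures and the chain. At limit stages of both constructions we need unions of $\E$-chains to be $\E$-substructures of the whole limit, and we need $\E$-substructures of $\mathbb A$ contained in some $A_\beta$ to be $\E$-substructures of $A_\beta$. We would try to derive both from hypotheses 3 and 4. If that fails, we would adopt them as standing conventions on $\E$: closure under unions of chains, and the ``two-out-of-three'' property, meaning $X\sqsubseteq Y\sqsubseteq Z$ with $X\sqsubseteq_{\E}Z$ and $Y\sqsubseteq_{\E}Z$ implies $X\sqsubseteq_{\E}Y$. For regular sub-algebras, which is the application in this paper, the two-out-of-three property clearly holds.
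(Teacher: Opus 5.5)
Your proposal is correct and follows essentially the same route as the paper: a bookkeeping construction of a continuous $\sqsubseteq_{\E}$-chain that uses AP for the extension tasks and JEP for universality, followed by a back-and-forth argument for uniqueness, which for homogeneity is started from the given partial isomorphism. The paper's proof also uses silently the two conventions you flag — that $X\sqsubseteq_{\E}\mathbb A_\lambda$ with $X\subseteq A_\beta$ gives $X\sqsubseteq_{\E}A_\beta$, and that $A_\beta\sqsubseteq_{\E}\mathbb A_\lambda$ — and, as you note, both hold for regular subalgebras.
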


\begin{proof}
Enumerate as $\{A_\alpha \mid \alpha<\lambda \}$ all (up to isomorphism) structures from $\K_\lambda$. We aim to recursively build an $\sqsubseteq_{\E}$-increasing sequence of structures $F_\alpha \in \K_\lambda$, and then set :
    $$\mathbb A_\lambda=\bigcup_{\alpha<\lambda}F_\alpha.$$
    For bookkeeping purposes, we fix a partition $\{\Phi_\gamma \mid \gamma<\lambda \}$ of the cardinal $\lambda$, consisting of sets of cardinality $\lambda$, such that $\min{\Phi_\gamma}\ge \gamma$, for all $\gamma < \lambda$.
	\vspace{1em}
    
	\textbf{Existence:} Let $F_0=A_0$, and enumerate as $\{g_\gamma \mid \gamma \in \Phi_0 \}$ all $\E$-embeddings $g$, such that $\dom{g}\sqsubseteq_{\E} F_0$ and $\rg{g} \sqsubseteq_{\E} A_\gamma$ for some $\gamma<\lambda$.
	
	\begin{itemize}
	\item  In a successor step $F_\alpha$ is defined, and so is the set $\{g_\gamma \mid \gamma \in \Phi_\alpha \}$. Note that if $\beta$ satisfies $\alpha \in \Phi_\beta$, then $\beta \leq \alpha$, so in particular, $g_\alpha$ is already defined, and $\dom{g_\alpha}\subseteq F_\alpha$. Using the Amalgamation Property, we can find $h$, and $F'_{\alpha+1}\in \K_\lambda$, closing the diagram
	\begin{center}
		
		\begin{tikzcd}
			\dom{g_\alpha} \arrow[d, swap, "g_\alpha"]  \arrow[r,"\sqsubseteq_{\E}"]
			& F_\alpha \arrow[d,dashed] 
			\\ \rg{g_\alpha} \arrow[r, "h", dashed]
			& F'_{\alpha+1}
		\end{tikzcd}
		
	\end{center}
	
	Using the Joint Embedding Property we can enlarge $F'_{\alpha+1}$ to $F_{\alpha+1}$, so that $F_{\alpha+1}$ contains an isomorphic copy of $A_\alpha$. 
    
    We use the set $\Phi_{\alpha+1}$ to index all $\E$-embeddings starting from substructures of $F_{\alpha+1}$.
	
	\item In the limit step, we just set 
    $$F_\alpha=\bigcup_{\gamma<\alpha}F_\gamma,$$
    and use the set $\Phi_{\alpha}$ to index all $\E$-embeddings starting from $\sqsubseteq_{\E}$-substructures of $F_{\alpha}$.
	
	\end{itemize}
	
	Universality of the $\mathbb A_\lambda$ is straightforward, so we leave it for the reader, and proceed to show the injectivity.

    Fix $B\sqsubseteq_{\E} C \in \K_\lambda$, and an $\E$-embedding $i:B\longrightarrow \mathbb A_\lambda$. There is some $\delta<\lambda$ such that $g_\delta = i^{-1}:i[B]\rightarrow B\sqsubseteq_{\E} C$. The recursive construction gives us the commutative diagram: 

    \begin{center}
		
		\begin{tikzcd}
			i[B] \arrow[d, swap, "i^{-1}"]  \arrow[r, "\sqsubseteq_{\E}"]
			& F_\delta \arrow[d, "\sqsubseteq_{\E}"] 
			\\ C \arrow[r, "h", dashed]
			& F'_{\delta+1}
		\end{tikzcd}
		
	\end{center}

    It follows that $h$ and $i$ agree on $B$.
    \vspace{1em}
    
    \textbf{Uniqueness:}
    Suppose that we have another structure $\mathbb A'_\lambda$ satisfying the conclusion of the theorem. In particular there exists a decomposition
    $$\mathbb A'_\lambda = \bigcup_{\alpha<\lambda}{A'_\alpha},$$
    into an $\E$-increasing sequence of $\K_\lambda$-structures.
    Let $f_0:A_0\longrightarrow A'_0$ be given. We will recursively define an increasing sequence of $\E$-embeddings $f_\alpha$ such that 
    $$f=\bigcup_{\alpha<\lambda}f_\alpha$$ will be an isomorphism from $\mathbb A_\lambda$ onto $\mathbb A'_\lambda$.
	\begin{itemize}
		\item Suppose $\alpha$ is an even successor ordinal and $f_\alpha$ is defined. The cardinal $\lambda$ is regular, so there exists $\beta>\alpha$ such that $\dom{f_\alpha}\sqsubseteq_{\E} A_\beta$. Since $\mathbb A'$ is $(\K_\lambda, \E)$-injective, we can find $f_{\alpha+1}$ closing the diagram
		
		\begin{center}
			\begin{tikzcd}[column sep=4 pt]
			\dom{f_\alpha} \arrow[d,swap, "f_\alpha"] 
			& \sqsubseteq_{\E} 
			& A_\beta \arrow[d,dashed, "f_{\alpha+1}"] 
			\\ \rg{f_\alpha} 
			& \sqsubseteq_{\E}
			& \mathbb{A}'_\lambda
		\end{tikzcd}
		\end{center}
		In particular, $A_{\alpha+1} \subseteq \dom(f_{\alpha+1})$.
		\item Suppose $\alpha$ is an odd successor ordinal and $f_\alpha$ is defined. There exists $\beta>\alpha$ such that $\rg{f_\alpha}\sqsubseteq_{\E} B_\beta$. Since $\mathbb A$ is $(\K_\lambda,\E)$-injective, we can find $f_{\alpha+1}$ closing the diagram
		
		\begin{center}
		\begin{tikzcd}[column sep=4 pt]
			\dom{f_\alpha} 
			& \sqsubseteq_{\E} 
			& \mathbb A_\lambda 
			\\ \rg{f_\alpha} \arrow[u, "f_\alpha^{-1}"] 
			& \sqsubseteq_{\E}
			& A'_\beta \arrow[u,dashed,swap, "f_{\alpha+1}^{-1}"] 
		\end{tikzcd}
		\end{center}
		In particular, $A'_{\alpha+1} \subseteq \rg(f_{\alpha+1})$.
		\item If $\alpha$ is limit, we set 
        $$f_\alpha=\bigcup_{\gamma<\alpha}f_\gamma.$$
		
	\end{itemize}
	
	It is evident that $f=\displaystyle{\bigcup_{\alpha<\lambda}f_\alpha}$ is an isomorphism.

    \textbf{Homogeneity:} To see that any structure satisfying the conditions $1.-4.$ is $(\K_\lambda,\E)$-homogeneous, note that in the proof of the uniqueness we started the construction from an arbitrary partial isomorphism $f_0$. Therefore we can just take $\mathbb A'_\lambda = \mathbb A_\lambda$ and follow the same construction.
\end{proof}

This theorem justifies the following important definition.
\begin{defin}
    The structure $\mathbb A_\lambda$ from Theorem \ref{fraissethm} is called the \emph{Fra\"iss\'e limit} of the class $(\K_\lambda,\E)$.
\end{defin}

\subsection{Classes of Boolean algebras}

At this point we restrict our attention to the class of (separative) partial orders, together with regular embeddings. Most of partial orders under consideration will be of the form $B^+$ for some Boolean algebra $B$.

\begin{defin}\label{classes} Given an uncountable cardinal $\lambda$, we introduce the following classes of Boolean algebras:
\begin{itemize}
    \item The class $\Bool_\lambda$ consists of all Boolean algebras of size less than $\lambda$, together with regular embeddings.
    \item The class $\BoolSeq_\lambda$ consists of all Boolean algebras $A$ of the form 
    $$A=\bigcup_{\alpha<\lambda}A_\alpha,$$
    where 
    \begin{itemize}
    \item $A_\alpha \in \Bool_\lambda$, for each $\alpha<\lambda$;
    \item $A_\alpha\sqsubseteq A_\beta$, for each $\alpha<\beta<\lambda$;
    \item $\bigcup\limits_{\alpha<\beta}A_\alpha=A_\beta$, for each $\alpha<\beta<\lambda$.
    \end{itemize}
    The embeddings of $\BoolSeq_\lambda$ are regular embeddings.
\end{itemize}
\end{defin}

Our goal is to characterize the Fra\"iss\'e limit of the class $\Bool_\lambda$, in the case where $\lambda$ is strongly inaccessible.

\section{Collapsing algebras}
The following theorem is a well-known characterization of the collapsing algebra.
\begin{thm}[Lemma 26.7, \cite{jech}] \label{thm1}
	For any infinite cardinal $\kappa$, there exists a unique up to isomorphism complete Boolean algebra $\coll$ that satisfies the following two properties:
	\begin{itemize}
	\item $\dens(\coll)=\kappa$,
	\item $\coll \Vdash |\kappa|=\omega$.
	\end{itemize}
	The algebra $\coll$ has size $2^\kappa$.
\end{thm}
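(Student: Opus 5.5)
Existence comes from the partial order $\col(\omega,\kappa)$ of finite partial functions from $\omega$ to $\kappa$, ordered by reverse inclusion, and I take its completion $\overline{\col(\omega,\kappa)}$. The poset has size $\kappa$ and is dense in its completion, so the density is at most $\kappa$. The generic function $\bigcup G$ is a surjection from $\omega$ onto $\kappa$, by the usual density argument: for each $\xi<\kappa$ the set of conditions with $\xi$ in their range is dense. So the completion forces $|\kappa|=\omega$. For density at least $\kappa$, note that a complete Boolean algebra with a dense subset $D$ of size $\mu<\kappa$ satisfies the $\mu^+$-c.c., so it preserves every cardinal above $\mu$ and cannot collapse $\kappa$ (for singular $\kappa$, it preserves the cardinals cofinal in $\kappa$, hence $\kappa$ itself). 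So the density is exactly $\kappa$.

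For the size, every element of a complete algebra $B$ with dense set $D$ is the join of the elements of $D$ below it. This gives an injection $B\to\mathcal P(D)$, so $|B|\le 2^\kappa$. For the lower bound, I would find $\kappa$ pairwise disjoint nonzero elements. One option is $\{\xi\}$ with $p\Vdash \dot f(0)=\xi$, giving a maximal antichain $\{p_\xi\mid\xi<\kappa\}$. Its arbitrary joins are pairwise distinct, which gives $2^\kappa$ elements.

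Uniqueness is the main step, and I would follow the standard argument. Let $B$ be complete with dense $D\subseteq B^+$ of size $\kappa$, and $B\Vdash|\kappa|=\omega$. Fix a name $\dot g$ for a surjection $\omega\to\kappa$ and a name $\dot h$ for a surjection $\omega\to \dot G\cap D$ (possible since $|D|=\kappa$ is collapsed). The goal is a dense subset $Q\subseteq B^+$ isomorphic to a dense subset of $\col(\omega,\kappa)$, more precisely a tree $T\subseteq B^+$ of height $\omega$ with these properties:
\begin{itemize}
\item each level is a maximal antichain;
\item each node has exactly $\kappa$ immediate successors refining it;
\item for every $d\in D$ and every node $t$ there is an extension of $t$ at some later level lying below $d$, or else incompatible with $d$.
\end{itemize}
The last condition is ensured by letting the levels decide the values $\dot h(n)$. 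Then $T$ is dense in $B$. The tree $\kappa^{<\omega}$ is dense in $\col(\omega,\kappa)$ in the same sense, since $\col(\omega,\kappa)$ has a dense copy of $\kappa^{<\omega}$. The two trees are isomorphic, and their completions agree by the universal property of completion (Proposition \ref{extension} applied to dense embeddings).

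The main obstacle is arranging exactly $\kappa$ successors below every node, that is, that $B$ restricted to any $b\neq0$ still has density $\kappa$ and collapses $\kappa$. This holds because $b\Vdash |\kappa|=\omega$ and the density argument above applies locally. Given that, refine the antichain deciding $\dot g(n)$ and $\dot h(n)$ below $t$ to one of size exactly $\kappa$. It has size at most $\kappa$ by density. To get at least $\kappa$, split it using $\dot g$: below $t$, the values $\dot g(m)$ for $m\in\omega$ range over $\kappa$, so some $m$ has at least $\kappa$ many possible values when $\kappa$ is regular. For singular $\kappa$, combine countably many $m$ with an extra step.
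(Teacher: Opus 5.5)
Your proposal follows the standard proof of Jech's Lemma~26.7, which the paper cites rather than proves. Existence comes from the completion of the finite partial functions. The size $2^\kappa$ comes from joins of subsets of a dense set and of a $\kappa$-sized antichain. Uniqueness comes from a dense subtree of $B^+$ isomorphic to $\kappa^{<\omega}$, built from refining maximal antichains that decide $\dot h(n)$ and are split using $\dot g$. The architecture is right, but two points need repair.

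First, the splitting step is incomplete when $\operatorname{cf}\kappa=\omega$. Let $V_m$ be the set of values of $\dot g(m)$ forced by conditions below $t$. Your counting argument works whenever $\operatorname{cf}\kappa>\omega$; regularity is not needed. When $\operatorname{cf}\kappa=\omega$, the ``extra step'' is exactly the missing content. It can happen that every $|V_m|<\kappa$, and then deciding any finitely many $\dot g(m)$ below $t$ only guarantees fewer than $\kappa$ pieces.

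The fix is to spread different $m$'s over disjoint pieces.
\begin{itemize}
\item Since $\kappa$ is uncountable, $B\upharpoonright t$ has no atoms: below an atom the generic filter lies in $V$, so nothing is collapsed. Hence there is an infinite antichain $\{t_k\mid k<\omega\}$ below $t$.
\item Fix $\kappa_k$ cofinal in $\kappa$. Below $t_k$ the possible values of the $\dot g(m)$, $m<\omega$, still cover $\kappa$. So some $m_k$ has at least $\kappa_k$ possible values, and a maximal antichain below $t_k$ deciding $\dot g(m_k)$ has size $\ge\kappa_k$.
\item The union over $k$ is an antichain of size $\kappa$ below $t$. Extend it to a maximal antichain below $t$ and take its common refinement with the antichain deciding $\dot h(n)$.
\item This refinement has size $\ge\kappa$, since each member of the first antichain meets a member of the second. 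It has size $\le\kappa$ by density.
\end{itemize}

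Second, your density lower bound and your splitting step both use that $\kappa$ is uncountable, and this cannot be avoided. For $\kappa=\omega$ the statement as written is false. $\mathcal P(\omega)$ is complete, has density $\omega$, trivially forces $|\omega|=\omega$, and is not isomorphic to the Cohen algebra. You should restrict to $\kappa>\omega$, as the paper's introduction does, or add atomlessness as a hypothesis when $\kappa=\omega$.
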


From these conditions it is easy to find a specific representation of the collapsing algebra $\coll$. For instance, $\coll$ is isomorphic to the completion of the partial order

$$B_\kappa := \{p:\dom(p)\longrightarrow \kappa \mid \dom(p)\in [\omega]^{<\omega}\},$$
with the order $p\leq q \iff p \supseteq q$.

\begin{cor}[Kripke, Thm. 26.8, \cite{jech}]\label{cor1}
	
	If $B\in \Bool_{\kappa^+}$, then
	$$\overline{B\oplus \coll} \simeq \coll.$$
	In particular, the algebra $\coll$ $\Bool_{\kappa^+}$-universal.
\end{cor}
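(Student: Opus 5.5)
The plan is to verify the two conditions of Theorem \ref{thm1} for the complete Boolean algebra $\overline{B\oplus \coll}$; uniqueness then gives $\overline{B\oplus\coll}\simeq\coll$. Fix $B\in\Bool_{\kappa^+}$, so $|B|\le\kappa$. By the proposition on the dense embedding $i:B^+\times C^+\to B\oplus C$, we may work with the partial order $B^+\times \coll^+$, or more concretely with the dense subset $B^+\times B_\kappa$, where $B_\kappa$ is the finite partial function order whose completion is $\coll$.

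\textbf{Density.} The set $B^+\times B_\kappa$ has size at most $\kappa\cdot\kappa=\kappa$, and it is dense in $\overline{B\oplus\coll}$. Hence $\dens(\overline{B\oplus\coll})\le\kappa$. For the reverse inequality there are two options. One is to note that the canonical embedding of $\coll$ into $\overline{B\oplus\coll}$ is complete, so $\overline{B\oplus\coll}$ forces $|\kappa|=\omega$. A complete Boolean algebra of density $\mu<\kappa$ cannot collapse $\kappa$ to $\omega$ when $\kappa$ is a cardinal above $\mu$: it has the $\mu^+$-c.c., so it preserves cardinals $\geq\mu^+$, and $\kappa\ge\mu^+$. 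Alternatively, one can just reduce to the density being exactly $\kappa$ by this same argument. Either way $\dens=\kappa$.

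\textbf{Collapse.} A generic filter for $B^+\times\coll^+$ yields, by the product lemma, a generic for $\coll$. That generic already adds a surjection $\omega\to\kappa$. Since the completion of the product is $\overline{B\oplus\coll}$, which has the same generic extensions, $\overline{B\oplus\coll}\Vdash|\kappa|=\omega$. More formally, the canonical complete embedding of $\coll$ into $\overline{B\oplus\coll}$ transfers the $\coll$-name for the surjection. I expect this step, together with the lower bound on density, to need the most care. In particular one must confirm that the canonical embedding (built from $b\mapsto(1,b)$ on positive parts, extended via Proposition \ref{extension}) is indeed a complete embedding, so that forced statements transfer. This is standard, following Jech.

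\textbf{Universality.} The "in particular" clause follows from the above. The map $b\mapsto b\wedge 1$ sends $B$ into $B\oplus\coll$, and this is a regular embedding: a maximal antichain $\{b_i\}$ of $B$ remains maximal in the free product, because any nonzero $x\wedge c$ with $x\in B^+$ meets some $b_i$, by working with the basic elements $x\times c$ in the Stone space picture. The inclusion of $B\oplus\coll$ into its completion is dense, hence regular. Composing these with the isomorphism $\overline{B\oplus\coll}\simeq\coll$ gives a regular embedding $B\to\coll$, so $\coll$ is $\Bool_{\kappa^+}$-universal.
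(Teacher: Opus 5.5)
Your proposal is correct and is essentially the argument the paper intends. The paper gives no proof and cites Kripke's corollary in Jech, which proceeds exactly as you do: the dense suborder $B^+\times B_\kappa$ has size $\kappa$ and forces $|\kappa|=\omega$ through the complete copy of $\coll$, so the uniqueness in Theorem \ref{thm1} applies, and universality follows from the regular map $b\mapsto b\wedge 1$ followed by the dense inclusion into the completion.

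One small remark: your chain-condition lower bound on the density tacitly assumes $\kappa$ is uncountable, which is the paper's setting. A more direct route is to project a dense subset of $\overline{B\oplus\coll}$ onto the complete subalgebra $\coll$, which gives $\dens\ge\kappa$ immediately.
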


An immediate consequence is that $\overline{\operatorname{Coll}(\omega,\delta_0) \oplus \operatorname{Coll}(\omega,\delta_1)} \simeq \operatorname{Coll}(\omega,\delta_1)$,
whenever $\delta_0 < \delta_1$. As a matter of fact, even more is true.

\begin{thm} \label{mainthm}
	Let $\kappa$ be an uncountable cardinal, and $B$ be a Boolean algebra of density less than $\kappa$. Let
    $$i:B \longrightarrow \operatorname{Coll}(\omega,\kappa)$$
    be a regular embedding. Then there exists an isomorphism 
	$$h:\operatorname{Coll}(\omega,\kappa) \longrightarrow \overline{B \oplus \operatorname{Coll}(\omega,\kappa)},$$
	such that $h\circ i [B]$ is the canonical embedding of $B$ into $\overline{B \oplus \operatorname{Coll}(\omega,\kappa)}$.
    \begin{center}
        \vspace{1cm}
		\begin{tikzcd}[row sep=2cm]
		\operatorname{Coll}(\omega,\kappa)\arrow[r, "h"]
		& \overline{B\oplus \operatorname{Coll}(\omega,\kappa)}
        \\ B\arrow[u, "i"]\arrow[ur, "\eta_B"]
		\end{tikzcd}
        \vspace{1cm}
	\end{center}
\end{thm}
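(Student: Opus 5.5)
We follow the method of Lemma 26.7 in \cite{jech}: identify $\coll$ with $\overline{B} \ast \dot{Q}$, where $\dot Q$ is the quotient, and show that this quotient is forced to be (the completion of) $\operatorname{Coll}(\omega,\kappa)$ of the ground model.

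First, by Proposition \ref{extension} we extend $i$ to a complete embedding $\bar i:\overline{B}\to\coll$. We then regard $\coll$ as a two-step iteration $\overline{B}\ast\dot{C}$. Here $\dot C$ is the $\overline B$-name for the quotient algebra $\coll/\dot G_B$, where $\dot G_B$ is the generic filter on $\overline B$ pulled back through $\bar i$. This gives an isomorphism $\coll\simeq\overline{B}\ast\dot C$ which sends $\bar i(b)$ to $(b,\dot 1)$.

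Next we show that $\overline B$ forces $\dot C$ to be isomorphic to the completion of $\operatorname{Coll}(\omega,\kappa)^V$. To do this, we check the two conditions of Theorem \ref{thm1} inside $V[G_B]$, where $\operatorname{Coll}(\omega,\kappa)^V$ is still the completion of $B_\kappa$ computed in the extension.
\begin{itemize}
\item $\dot C$ collapses $\kappa$. In $V[G_B]$, forcing with $\dot C$ yields $V[G]$ for a $\coll$-generic $G$, and there $\kappa$ is countable.
\item $\dot C$ has density $\kappa$ in $V[G_B]$. The set $\{\pi(d)\}$, where $d$ ranges over a dense set of size $\kappa$ in $\coll$ and $\pi$ is the projection to the quotient, is dense. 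Conversely, the density is at least $\kappa$, since a forcing of density $\mu<\kappa$ cannot collapse $\kappa$. We need $\kappa$ to remain uncountable, indeed a cardinal, in $V[G_B]$. This holds because $B$ has density $<\kappa$, so if $\kappa$ is regular then $\overline B$ preserves it. The singular case needs more care: there we use that $\dens(\overline{B})<\kappa$ cannot collapse $\kappa$ to $\omega$ together with the characterization "density $\kappa$ and collapses $\kappa$ to $\omega$". Alternatively we use Jech's formulation via the existence of a name for a surjection $\omega\to\kappa$ and a dense set of size $\kappa$ forming a tree-like system, which is what the proof of Lemma 26.7 actually uses.
\end{itemize}
Since $\operatorname{Coll}(\omega,\kappa)^V=\overline{B_\kappa}^{V}$ still has density $\kappa$ and collapses $\kappa$ in $V[G_B]$, the uniqueness part of Theorem \ref{thm1}, applied in $V[G_B]$, gives $\Vdash_{\overline B}\dot C\simeq \overline{\check{B_\kappa}}$.

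Finally, $\overline{B}\ast\check{Q}$, for a ground-model poset $Q$, is the completion of the product $B^+\times Q$. By the earlier proposition, this completion is $\overline{B\oplus\coll}$. We fix a name $\dot\varphi$ for the isomorphism $\dot C\to\overline{B_\kappa}$. It induces an isomorphism $\overline B\ast\dot C\simeq\overline B\ast\check{\overline{B_\kappa}}$ which acts as the identity on the first coordinate, because the isomorphism of the iteration fixes every $(b,\dot 1)$. Composing all these maps gives $h$, and $h\circ i$ is then the canonical embedding $\eta_B$.

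The main obstacle is the density/uniqueness step in $V[G_B]$. Theorem \ref{thm1} needs $\kappa$ to be a cardinal there, and the quotient must genuinely have density exactly $\kappa$. For singular $\kappa$ we would retreat to Jech's combinatorial proof, which builds a dense subtree of $\kappa^{<\omega}$ directly rather than quoting uniqueness as stated.
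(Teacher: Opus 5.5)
Your proposal is correct and follows essentially the same route as the paper. You write the collapse as a two-step iteration over $\overline{B}$, apply Theorem~\ref{thm1} in the intermediate extension to identify the quotient with the collapse, transfer through the forced isomorphism as in Lemma~\ref{forcinglem}, and take the completion of the product. The paper differs only in bookkeeping: it works with dense subalgebras $B_0\subseteq B_1$ of $B$ and $\coll$ with $|B_0|<\kappa$ and $|B_1|=\kappa$, rather than with $\overline{B}$ and the quotient algebra.

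Your worry about singular $\kappa$ is unfounded, so you do not need to fall back on Jech's combinatorial proof. Since $\overline{B}$ has a dense set of size $\mu<\kappa$, it is $\mu^+$-c.c. and preserves every cardinal $\geq\mu^+$, including singular ones, which are limits of preserved successor cardinals. Hence $\kappa$ remains an uncountable cardinal in $V[G_B]$, and Theorem~\ref{thm1} applies there verbatim; the paper uses this tacitly.
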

The proof will rely on a Lemma.
\begin{lem} \label{forcinglem}
	Suppose that for some forcing $\mathbb{P}$, we have
	$$ \mathbb{P} \Vdash \dot{f}: \check{\mathbb{Q}}_0 \longrightarrow \dot{\mathbb{Q}}_1 \text{ is an isomorphism of forcings}.$$
	Then a function $\bar{f}: \mathbb{P} \ast \check{\mathbb{Q}}_0 \longrightarrow \mathbb{P} \ast \dot{\mathbb{Q}}_1$
	given by
	$$(p,\check{q})\mapsto (p,\dot{f}(\check{q}))$$
	is an isomorphism onto a dense subset.	
\end{lem}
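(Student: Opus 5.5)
We need to prove Lemma \ref{forcinglem}: if $\mathbb P$ forces $\dot f:\check{\mathbb Q}_0\to\dot{\mathbb Q}_1$ to be an isomorphism, then $\bar f(p,\check q)=(p,\dot f(\check q))$ is an isomorphism onto a dense subset of $\mathbb P\ast\dot{\mathbb Q}_1$. Recall that the elements of $\mathbb P\ast\dot{\mathbb Q}$ are pairs $(p,\dot q)$ with $p\Vdash \dot q\in\dot{\mathbb Q}$, and $(p',\dot q')\le(p,\dot q)$ iff $p'\le p$ and $p'\Vdash \dot q'\le\dot q$. I read $\mathbb P\ast\check{\mathbb Q}_0$ as the set of pairs $(p,\check q)$ with $q\in\mathbb Q_0$; it is dense in the full iteration, which is all the later application needs. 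Here $\dot f(\check q)$ denotes a $\mathbb P$-name for the image of $q$, obtained by fullness. Since it is forced to lie in $\dot{\mathbb Q}_1$, $\bar f$ is well defined.

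\emph{Order preservation in both directions.} Suppose $p'\le p$. Then
$$p'\Vdash \check q'\le\check q \iff p'\Vdash \dot f(\check q')\le\dot f(\check q),$$
because $\dot f$ is forced to be an order isomorphism. Hence $(p',\check q')\le(p,\check q)$ if and only if $\bar f(p',\check q')\le\bar f(p,\check q)$. This gives monotonicity and order-reflection. The only subtlety is that $\check q'\le\check q$ is decided by the trivial condition, which is harmless.

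\emph{Injectivity.} Everything is considered up to the usual identification of equivalent conditions, by separativity. Order-reflection then yields injectivity: if the images coincide, each preimage is below the other, so they are equivalent.

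\emph{Density of the image.} This is the main step. Let $(p,\dot r)\in\mathbb P\ast\dot{\mathbb Q}_1$. Since $\dot f$ is forced to be surjective, $p\Vdash \exists q\in\check{\mathbb Q}_0\ (\dot f(q)=\dot r)$. So there are $p'\le p$ and $q\in\mathbb Q_0$ with $p'\Vdash \dot f(\check q)=\dot r$; the ground-model witness can be pinned down by strengthening $p$, because $\mathbb Q_0$ is in $V$. Then
$$\bar f(p',\check q)=(p',\dot f(\check q))\le(p,\dot r),$$
because $p'\le p$ and $p'\Vdash \dot f(\check q)=\dot r$. Therefore the image is dense.

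Combining the two steps, $\bar f$ is an isomorphism of $\mathbb P\ast\check{\mathbb Q}_0$ onto a dense suborder of $\mathbb P\ast\dot{\mathbb Q}_1$, hence their completions agree. The obstacle I expect is purely bookkeeping about names: ensuring $\dot f(\check q)$ is a legitimate name, and that "isomorphism" is meant modulo equivalence of conditions.
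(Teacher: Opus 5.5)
Your proposal is correct and follows essentially the same route as the paper: the key step is density, obtained by using surjectivity of $\dot f$ and strengthening $p$ to decide the ground-model preimage of $\dot r$, just as the paper does with the name $\dot q_0$. You additionally spell out order-preservation, order-reflection and injectivity, which the paper dismisses as straightforward.
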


\begin{proof}
	Verification that $\bar{f}$ preserves ordering and incompatibility is straightforward. We will check that the conditions of the form $(p,\dot{f}(\check{q}))$ constitute a dense subset of 
	$\mathbb{P} \ast \dot{\mathbb{Q}}_1$. 
	\par Fix $(p,\dot{q})\in \mathbb{P} \ast \dot{\mathbb{Q}}_1$. There is a $\mathbb{P}$-name $\dot{q}_0$ such that
	$$p \Vdash \dot{f}(\dot{q}_0)=\dot{q}.$$
	Let $p_0\le p$ be a condition deciding $\dot{q}_0$, i.e. 
	$$p_0 \Vdash \dot{q}_0=\check{q}_0.$$
	Then it is straightforward that $(p_0,\dot{f}(\check{q}_0)) \le (p,\dot{q}).$
\end{proof}

\begin{proof}[Proof of Theorem \ref{mainthm}]
	Without loss of generality we can assume that $i$ is the identity inclusion. Let $B_0 \subseteq B_1$ be some dense sub-algebras of $B$ and  $\operatorname{Coll}(\omega,\kappa)$ respectively, such that $|B_0|<\kappa$, and $|B_1|=\kappa$. Since a dense sub-algebra is always regular, it follows that $B_0\sqsubseteq B_1$. Then
	
	$$B_1^+ \simeq B_0^+ \ast (B_1^+:\dot{G}) \sqsubseteq B_0^+\ast (\overline{B_1^+:\dot{G}})^+,$$
	where $\dot{G}$ is a $B_0^+$-name for a generic filter.
	Note that
	$$B_0^+ \Vdash |(B_1^+:\dot{G})|=\kappa,$$
	and
	$$B_0^+ \Vdash \text{``} (B_1^+:\dot{G}) \Vdash |\kappa|=\omega. \text{"}$$
	It follows from Theorem \ref{thm1} that $B_0^+$ forces $\overline{B_1^+:\dot{G}}$ and 
	$\operatorname{Coll}(\omega,\kappa)$ to be isomorphic. Applying Lemma \ref{forcinglem}, we therefore obtain a sequence of dense embeddings
	$$B_0^+\ast (\overline{B_1^+:\dot{G}})^+ \longrightarrow B_0^+ \ast \operatorname{Coll}(\omega,\kappa)^+ \sqsubseteq B_0^+\times \operatorname{Coll}(\omega,\kappa)^+.$$
	
	This way we got a commutative diagram of regular embeddings
	
	\begin{center}
		\begin{tikzcd}
		B_1^+\arrow[r]
		& B_0^+\times \operatorname{Coll}(\omega,\kappa)^+
	\\B_0^+\arrow[u, "\sqsubseteq"]\arrow[ur, swap, "\eta_{B_0^+}"]
		\end{tikzcd}
	\end{center}
	where $\eta_{B_0^+}$ is the canonical embedding of $B_0^+$
	into $B_0^+\times \operatorname{Coll}(\omega,\kappa)^+$. Given that $B_0$ is a dense sub-algebra of $B$, after passing to completions we obtain

	\begin{center}
		\begin{tikzcd}
		\operatorname{Coll}(\omega,\kappa)\arrow[r]
		&\overline{B \oplus \operatorname{Coll}(\omega,\kappa)}
		\\
		\overline{B}\arrow[u, "\sqsubseteq"]\arrow[ur, swap, "\eta_{\bar{B}}"]
		\end{tikzcd}
	\end{center}
This concludes the proof.
\end{proof}

\begin{cor}\label{coramalgamation}
	If $\lambda$ is strongly inaccessible, then $\Bool_\lambda$ has the Amalgamation Property.
\end{cor}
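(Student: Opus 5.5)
Given $A \sqsubseteq B$ and $A \sqsubseteq C$ with $|B|,|C|<\lambda$ (after identifying $A$ with its images), we will amalgamate inside a single collapsing algebra. The idea is to embed both $B$ and $C$ regularly into $\operatorname{Coll}(\omega,\kappa)$ for a suitable $\kappa<\lambda$ so that the two copies of $A$ coincide. Theorem \ref{mainthm} is the tool for moving one embedding of $A$ onto the other.

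\textbf{Step 1: choose $\kappa$.} Pick an uncountable cardinal $\kappa<\lambda$ with $\kappa\ge |B|,|C|$. Since $\lambda$ is strongly inaccessible, $2^\kappa<\lambda$, and so $|\operatorname{Coll}(\omega,\kappa)|=2^\kappa<\lambda$ by Theorem \ref{thm1}. Strong inaccessibility is used exactly here, to keep the amalgam inside $\Bool_\lambda$. We also take $\kappa>|A|$, so that $A$ has density $<\kappa$, as Theorem \ref{mainthm} requires.

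\textbf{Step 2: embed $B$ and $C$.} By Corollary \ref{cor1}, there are regular embeddings $j_B:B\to \operatorname{Coll}(\omega,\kappa)$ and $j_C:C\to\operatorname{Coll}(\omega,\kappa)$. For instance, $B\to \overline{B\oplus\operatorname{Coll}(\omega,\kappa)}\simeq\operatorname{Coll}(\omega,\kappa)$ via the canonical embedding. Restricting to $A$ gives two regular embeddings $i_B=j_B\restriction A$ and $i_C=j_C\restriction A$, each regular as a composition of regular embeddings.

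\textbf{Step 3: align the two copies of $A$.} Apply Theorem \ref{mainthm} to $i_B$ and to $i_C$. This yields isomorphisms
$$h_B,h_C:\operatorname{Coll}(\omega,\kappa)\to\overline{A\oplus\operatorname{Coll}(\omega,\kappa)}$$
with $h_B\circ i_B=\eta_A=h_C\circ i_C$. Put $\sigma=h_B^{-1}\circ h_C$, an automorphism of $\operatorname{Coll}(\omega,\kappa)$ satisfying $\sigma\circ i_C=i_B$. Then $f'=j_B$ and $g'=\sigma\circ j_C$ are regular embeddings into $\operatorname{Coll}(\omega,\kappa)$, and they agree on $A$.

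\textbf{Step 4: take the amalgam.} Let $D$ be the subalgebra of $\operatorname{Coll}(\omega,\kappa)$ generated by $f'[B]\cup g'[C]$; then $|D|<\kappa^+\le 2^\kappa<\lambda$. We must still check that $f'$ and $g'$ are regular as maps into $D$ rather than into the whole collapsing algebra. This holds because $f'[B]\s D\s \operatorname{Coll}(\omega,\kappa)$: a maximal antichain of $f'[B]$ is maximal in $\operatorname{Coll}(\omega,\kappa)$, hence maximal in the intermediate algebra $D$. Alternatively, we can take the whole $\operatorname{Coll}(\omega,\kappa)$ as the amalgam, since it has size $<\lambda$.

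The main obstacle is Step 3, namely guaranteeing that the two images of $A$ can be matched by an automorphism. Theorem \ref{mainthm} handles this, provided we check that it applies to a regular embedding of $A$ rather than only of its completion. Its hypothesis of density less than $\kappa$ holds by the choice of $\kappa$.
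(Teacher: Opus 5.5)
Your proof is correct. It rests on the same key fact as the paper's, Theorem~\ref{mainthm}, but assembles the amalgam differently.

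The paper replaces $B_1,B_2$ by collapsing algebras $\col(\omega,\delta_1),\col(\omega,\delta_2)$ of possibly different sizes. It then uses Theorem~\ref{mainthm} to put each copy of $B_0$ in canonical position inside $\overline{B_0\oplus\col(\omega,\delta_i)}$. Finally it amalgamates in the free product $\overline{B_0\oplus\col(\omega,\delta_1)\oplus\col(\omega,\delta_2)}$, where both composites are the canonical embedding of $B_0$.

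You instead fix one common $\kappa$ and read Theorem~\ref{mainthm} as a homogeneity statement. Any two regular embeddings of $A$ into $\col(\omega,\kappa)$ differ by an automorphism, namely $\sigma=h_B^{-1}\circ h_C$, so $\col(\omega,\kappa)$ itself serves as the amalgam. What each route buys:
\begin{itemize}
\item Your route avoids the triple free product and the check that both composites into it are canonical.
\item Your explicit choice $\kappa>|A|$ secures the hypothesis $\dens(A)<\kappa$ of Theorem~\ref{mainthm}. The paper's phrase ``collapsing algebra of the same density'' passes over this point, though it is fixed by taking the $\delta_i$ larger.
\item The paper's version is a symmetric, pushout-style construction that needs no auxiliary automorphism.
\end{itemize}

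One correction to your Step 4. The generated subalgebra satisfies $|D|\le|B|+|C|+\aleph_0$, not merely $|D|<2^\kappa$. So if you take $D$ as the amalgam, you need neither $2^\kappa<\lambda$ nor even $\kappa<\lambda$. Your argument then yields amalgamation for $\Bool_\lambda$ for every uncountable $\lambda$. Strong inaccessibility is used only in the variant where all of $\col(\omega,\kappa)$ is the amalgam. Your comment in Step 1 that it is used ``exactly here'' therefore overstates its role.
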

\begin{proof}
    Consider a diagram in the category $\Bool_\lambda$:
    \begin{center}
			\begin{tikzcd}
				&   B_1 
				&
				& \\
				B_0 \ar[ur, "i_1"] \ar[dr, "i_2"]
				&
				& 
				\\
				&    B_2
				&
				&
			\end{tikzcd}
		\end{center}
        By Corollary \ref{cor1}, every Boolean algebra can be embedded into a collapsing algebra of the same density, so we can assume -- without loss of generality -- that $B_1=\col(\omega,\delta_1)$, and $B_2=\col(\omega,\delta_2)$, for some cardinals $\delta_1,\delta_2<\lambda$.
        By Theorem \ref{mainthm} we can extend $i_1$ and $i_2$ to canonical embeddings
        $$h_1\circ i_1:B_0 \longrightarrow \overline{B_0\oplus \col(\omega,\delta_1)},$$
        and
        $$h_2\circ i_2:B_0 \longrightarrow \overline{B_0\oplus \col(\omega,\delta_2)}.$$

        Let $\eta_1$ and $\eta_2$ be the canonical embeddings:
        $$\eta_1:\overline{B_0\oplus \col(\omega,\delta_1)}\longrightarrow \overline{B_0\oplus \col(\omega,\delta_1)\oplus \col(\omega,\delta_2)},$$
        and
        $$\eta_2:\overline{B_0\oplus \col(\omega,\delta_2)}\longrightarrow \overline{B_0\oplus \col(\omega,\delta_1)\oplus \col(\omega,\delta_2)}.$$

        Then both compositions in the following diagram are canonical, so the diagram commutes.
        
        \begin{center}
			\begin{tikzcd}
				&   \col(\omega,\delta_1) \ar[r, "h_1"] & \overline{B_0\oplus \col(\omega,\delta_1)} \ar[dr, "\eta_1"]
				&
				& \\
				B_0 \ar[ur, "i_1"] \ar[dr, "i_2"]  & & & \overline{B_0\oplus \col(\omega,\delta_1) \oplus \col(\omega,\delta_2)}
				&
				& 
				\\
				&    \col(\omega,\delta_2) \ar[r, "h_2"] & \overline{B_0\oplus \col(\omega,\delta_2)}  \ar[ur, "\eta_2"]
				&
				&
			\end{tikzcd}
		\end{center}
        
\end{proof}

Another corollary is a minor refinement of Lemma 26.9 from \cite{jech}, where complete embeddings are considered.

\begin{cor}[injectivity of the Collapsing Algebras] \label{injectivityColAlg}
	Let $\delta$ be any cardinal. For every pair of Boolean algebras $B \sqsubseteq C$, where $\dens(C)<\delta$, each regular embedding $$i:B \longrightarrow \operatorname{Coll}(\omega,\delta)$$ can be extended to a regular embedding $$\bar{i}:C \longrightarrow \operatorname{Coll}(\omega,\delta).$$
\end{cor}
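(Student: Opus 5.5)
Since $B \sqsubseteq C$ and $\dens(B)\le\dens(C)<\delta$, Theorem \ref{mainthm} applies to $i$. It gives an isomorphism
$$h:\operatorname{Coll}(\omega,\delta)\longrightarrow \overline{B\oplus \operatorname{Coll}(\omega,\delta)}$$
such that $h\circ i=\eta_B$ is the canonical embedding. It therefore suffices to embed $C$ regularly into $\overline{B\oplus\operatorname{Coll}(\omega,\delta)}$ so that the embedding agrees with $\eta_B$ on $B$; composing with $h^{-1}$ then gives $\bar i$.

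\textbf{Step 1.} Find a regular embedding $j:C\to \operatorname{Coll}(\omega,\delta)$, for instance by Corollary \ref{cor1}, and regard $B\sqsubseteq C$ as a regular sub-algebra of $\operatorname{Coll}(\omega,\delta)$ via $j\restriction B$. Applying Theorem \ref{mainthm} to $j\restriction B$ gives an isomorphism
$$h':\operatorname{Coll}(\omega,\delta)\to\overline{B\oplus\operatorname{Coll}(\omega,\delta)}$$
with $h'\circ j\restriction B=\eta_B$.

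\textbf{Step 2.} Set $\bar i=h^{-1}\circ h'\circ j$. This is a composition of a regular embedding with isomorphisms, so it is regular. On $B$ we get
$$\bar i\restriction B=h^{-1}\circ\eta_B=h^{-1}\circ h\circ i=i,$$
so $\bar i$ extends $i$. If Corollary \ref{cor1} only yields embeddings into collapse algebras of density $\dens(C)$, I would first embed $C$ into $\operatorname{Coll}(\omega,\dens(C))$. Then I would use $\overline{\operatorname{Coll}(\omega,\dens C)\oplus\operatorname{Coll}(\omega,\delta)}\simeq\operatorname{Coll}(\omega,\delta)$, again from Corollary \ref{cor1}, to land in $\operatorname{Coll}(\omega,\delta)$. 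Corollary \ref{cor1} is stated for sizes, so I would apply it to a dense sub-algebra of $C$ of size $\dens(C)$, whose completion equals that of $C$.

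\textbf{Main obstacle.} I expect the delicate point to be getting the density hypotheses right. Theorem \ref{mainthm} needs $\dens(B)<\delta$ and an uncountable $\delta$, and the embedding $j$ of $C$ must have density $<\delta$ and be a regular embedding of $C$ itself, not just of a dense subalgebra. Restricting from the completion $\overline C$ to $C$ preserves regularity, because $C$ is dense in $\overline C$ and the embedding of $\overline C$ is complete (Proposition \ref{extension}). The case of a countable $\delta$ is degenerate, since then $\operatorname{Coll}(\omega,\delta)$ is the Cohen algebra, and it would be handled separately or excluded.
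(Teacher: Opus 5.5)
Your proof is correct and is essentially the paper's argument. The paper's one-line claim that $i$ is ``equivalent'' to $B\sqsubseteq C\to\overline{C\oplus\operatorname{Coll}(\omega,\delta)}$ is exactly your double application of Theorem~\ref{mainthm} (to $i$ and to $j\restriction B$) followed by $\bar i=h^{-1}\circ h'\circ j$, where $j$ can be taken to be the canonical embedding of $C$ composed with Kripke's isomorphism. You use $\dens(B)\le\dens(C)$ without comment; it does hold for regular sub-algebras (project a dense subset of $C$ into $\overline{B}$), so nothing is missing.
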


\begin{proof}
	By Theorem \ref{mainthm}, $i$ is equivalent to the composition:
	$$B \sqsubseteq C \longrightarrow \overline{C\oplus \operatorname{Coll}(\omega,\delta)},$$
	and this embedding can clearly be extended to $C$.
\end{proof}

\begin{thm}\label{uniquenessthm}
    If $\lambda$ is strongly inaccessible, then $\Bool_\lambda$ is a Fra\"iss\'e class. In particular, there exists a unique up to isomorphism Boolean algebra $\blim \in \BoolSeq_\lambda$ that is $\Bool_\lambda$-injective. Moreover, the algebra $\blim$ is $\Bool_\lambda$-homogeneous.
\end{thm}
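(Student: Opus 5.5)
The plan is to verify the four hypotheses of Theorem \ref{fraissethm} for $(\Bool_\lambda,\sqsubseteq)$ and then read off existence, uniqueness and homogeneity from that theorem. First, since $\lambda$ is strongly inaccessible, $\lambda^{<\lambda}=\lambda$. The language of Boolean algebras is finite, so the cardinal-arithmetic hypothesis of Theorem \ref{fraissethm} holds. Regular embeddings contain all isomorphisms and are closed under composition: if $A\sqsubseteq B\sqsubseteq C$, a maximal antichain of $A$ is maximal in $B$, hence maximal in $C$. Moreover, there are at most $\lambda$ isomorphism types of algebras of size $<\lambda$, and at most $\lambda$ regular embeddings between any two of them. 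This is what the bookkeeping in the proof of Theorem \ref{fraissethm} requires.

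The verification of the individual properties goes as follows.

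\emph{Amalgamation.} This is exactly Corollary \ref{coramalgamation}. The amalgam $\overline{B_0\oplus\col(\omega,\delta_1)\oplus\col(\omega,\delta_2)}$ need not have size $<\lambda$, but its size is at most $2^{\max(\delta_1,\delta_2,|B_0|)}<\lambda$ by strong inaccessibility, so it lies in $\Bool_\lambda$. Alternatively, one can pass to the regular subalgebra generated by the images, which is dense in the free product.

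\emph{Joint embedding.} This follows from amalgamation over the two-element algebra $\{0,1\}$, which embeds regularly into every Boolean algebra. A direct argument also works: use the free product $B\oplus C$, into which both $B$ and $C$ embed regularly.

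\emph{Closure under $\sqsubseteq$-substructures.} This is trivial, since a regular subalgebra of an algebra of size $<\lambda$ again has size $<\lambda$.

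\emph{$<\lambda$-closure.} Let $\{A_\alpha\mid\alpha<\delta\}$ be a continuous $\sqsubseteq$-chain with $\delta<\lambda$. The union $A_\delta$ has size $<\lambda$ by the regularity of $\lambda$. It remains to show $A_\alpha\sqsubseteq A_\delta$. Take a maximal antichain $X$ of $A_\alpha$ and a nonzero $b\in A_\delta$; then $b\in A_\beta$ for some $\beta\ge\alpha$. Since $X$ is maximal in $A_\beta$, $b$ meets some element of $X$. Hence $X$ remains maximal in $A_\delta$.

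With these verified, Theorem \ref{fraissethm} produces $\blim=\bigcup_{\alpha<\lambda}F_\alpha$ with each $F_\alpha\in\Bool_\lambda$ and the chain continuous and regular, so $\blim\in\BoolSeq_\lambda$. The theorem also gives that $\blim$ is universal, injective and homogeneous, and that it is unique among structures satisfying a)--c).

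The main obstacle is the uniqueness clause as phrased: \emph{any} $\Bool_\lambda$-injective $A\in\BoolSeq_\lambda$ should be isomorphic to $\blim$. Such an $A$ comes with a decomposition $A=\bigcup_{\alpha<\lambda} A_\alpha$ satisfying a) and is injective, but universality has to be derived. I would derive it from injectivity applied to $\{0,1\}\sqsubseteq C$. The inclusion $\{0,1\}\to A$ is a regular embedding, so injectivity extends it to a regular embedding $C\to A$. With universality in hand, the back-and-forth argument in the uniqueness part of Theorem \ref{fraissethm} applies verbatim; that argument in fact uses only a) and c). Homogeneity of $\blim$ then follows exactly as in that proof, by starting the back-and-forth from the given partial isomorphism between two regular subalgebras of size $<\lambda$.
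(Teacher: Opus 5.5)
Your proposal is correct and follows the paper's proof. The paper notes that every hypothesis of Theorem \ref{fraissethm} is routine except the Amalgamation Property, which it takes from Corollary \ref{coramalgamation}. You additionally supply the routine checks the paper leaves implicit: the bound $2^{\max(\delta_1,\delta_2,|B_0|)}<\lambda$ on the size of the amalgam, JEP via $\{0,1\}$ or the free product, $<\lambda$-closure, and deriving universality from injectivity for the uniqueness clause (the paper makes the same observation later, in the proof of Theorem \ref{structurethm}).
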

\begin{proof}
	Almost all conditions required by Theorem \ref{fraissethm} are straightforward. The single exception is the Amalgamation Property, which is Corollary \ref{coramalgamation}.
\end{proof}

Our next goal is to find a representation of the algebra $\blim.$ By unpacking the definition of $\Bool_\lambda$-homogeneity, we obtain:

\begin{cor}
	For each pair of isomorphic regular sub-algebras $B_0,B_1\sqsubseteq \blim$, every isomorphism
	$f:B_0 \longrightarrow B_1$ can be extended to an automorphism $\bar{f}:\blim \longrightarrow \blim$. 
\end{cor}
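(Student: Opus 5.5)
We read the statement in the sense of the homogeneity clause of Theorem \ref{fraissethm}: $B_0,B_1\sqsubseteq\blim$ are regular sub-algebras lying in $\Bool_\lambda$, i.e.\ of size $<\lambda$. With that reading the corollary is the homogeneity part of Theorem \ref{uniquenessthm}, restated in terms of Boolean algebras. My plan is to unwind that part directly, running the back-and-forth from the uniqueness proof of Theorem \ref{fraissethm} with $\mathbb A_\lambda=\mathbb A'_\lambda=\blim$ and with initial partial isomorphism $f_0=f$.

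First, fix the decomposition $\blim=\bigcup_{\alpha<\lambda}A_\alpha$ given by $\blim\in\BoolSeq_\lambda$. The initial map must be a legitimate $\Bool_\lambda$-morphism between regular pieces, so we check where $B_0$ sits. Since $|B_0|<\lambda$ and $\lambda$ is regular, $B_0\subseteq A_\beta$ for some $\beta<\lambda$. Moreover $B_0\sqsubseteq A_\beta$. Indeed, a maximal antichain of $B_0$ is maximal in $\blim$, hence maximal in the intermediate algebra $A_\beta$. The same argument applies to $B_1$ and to the domain and range of every later $f_\alpha$.

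Next, run the recursion. At even successor steps, choose $\beta$ with $\dom f_\alpha\sqsubseteq A_\beta$ and $A_\alpha\subseteq A_\beta$. Then use the $\Bool_\lambda$-injectivity of $\blim$ (Theorem \ref{uniquenessthm}) to extend $f_\alpha$ to a regular embedding $f_{\alpha+1}:A_\beta\to\blim$. At odd successor steps, do the same for $f_\alpha^{-1}$, absorbing $A_\alpha$ into the range. At each step the range of the new map is a regular sub-algebra of size $<\lambda$, so the previous paragraph applies to it. At limit steps $\alpha<\lambda$, take unions. The union of a chain of regular embeddings is again a regular embedding: a maximal antichain of the union domain is a maximal antichain of some earlier piece. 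This is the $<\lambda$-closedness, and $\lambda$ regular keeps the sizes below $\lambda$. Finally $\bar f=\bigcup_{\alpha<\lambda}f_\alpha$ is defined on all of $\blim$ and onto $\blim$, it extends $f$, and it is an automorphism.

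The main obstacle is bookkeeping rather than mathematics. One must make sure that every intermediate domain and range is a \emph{regular} sub-algebra of size $<\lambda$, so that injectivity can be invoked. The only non-formal input, injectivity of $\blim$, rests on the Amalgamation Property, Corollary \ref{coramalgamation}, and is already packaged in Theorem \ref{uniquenessthm}.
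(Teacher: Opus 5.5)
Your overall route is the paper's. The corollary is just the homogeneity clause of Theorem \ref{uniquenessthm}, i.e.\ the back-and-forth from the proof of Theorem \ref{fraissethm} run with $\mathbb A'_\lambda=\mathbb A_\lambda=\blim$ and starting map $f_0=f$. Your reading that $|B_0|,|B_1|<\lambda$ is the intended one, and it cannot be dropped. By Proposition \ref{absorption}, $\blim\simeq\blim\oplus\blim$, so $\blim$ is isomorphic to a proper regular sub-algebra of itself, and such an isomorphism cannot extend to an automorphism. Your successor steps are fine, but the limit step rests on a false claim.

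You assert that a maximal antichain of $\bigcup_{\gamma<\alpha}\dom f_\gamma$ is a maximal antichain of some earlier piece. This fails once the antichain has size $\ge\operatorname{cf}(\alpha)$. The conclusion you draw from it, that the union of an increasing chain of regular sub-algebras of $\blim$ is regular in $\blim$, is false in general. Take a maximal antichain $\{a_n\mid n\in\omega\}$ of nonzero elements of $\blim$ (e.g.\ inside a regular copy of $\col(\omega,\omega_1)$). The finite sub-algebras $E_n$ generated by $a_1,\dots,a_n$ are regular, since a maximal antichain of a finite algebra has join $1$. But in $\bigcup_n E_n$ the antichain $\{a_n\mid n\ge 1\}$ is maximal, while $a_0\neq 0$ is disjoint from all its members.

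Nor is this what $<\lambda$-closedness gives. That property says the union of a continuous chain lies in $\Bool_\lambda$ and each piece is regular in the union; it says nothing about the union being regular in the ambient $\blim$. You need exactly the latter, for $\dom f_\alpha$ (to apply injectivity at stage $\alpha+1$) and for $\rg f_\alpha$ (so that $f_\alpha$ is a regular embedding into $\blim$).

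The repair comes from the shape of your own construction:
\begin{enumerate}
\item At cofinally many $\gamma<\alpha$ (the forth steps), $\dom f_\gamma$ is a piece $A_{\beta_\gamma}$ of the decomposition.
\item At cofinally many $\gamma<\alpha$ (the back steps), $\rg f_\gamma$ is a piece $A_{\delta_\gamma}$.
\item By continuity of the decomposition and regularity of $\lambda$, $\dom f_\alpha$ and $\rg f_\alpha$ are then themselves pieces $A_{\beta^*}$ and $A_{\delta^*}$, with $\beta^*,\delta^*<\lambda$.
\item Every piece of a $\sqsubseteq$-increasing chain is regular in its union $\blim$.
\end{enumerate}
With this in place of your sentence, the argument is complete. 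It is also what tacitly makes the limit step in the proof of Theorem \ref{fraissethm} work.
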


This is a well-known property of the the L\'evy collapse, that is proven in Theorem 26.12 \cite{jech}. In fact, we will see that $\blim$ contains the L\'evy collapse as a dense subset.

\begin{thm} \label{structurethm}
	The algebra $\blim$ from Theorem \ref{uniquenessthm} is isomorphic to any of the following:
    
    \begin{enumerate}
    \item Any $\sqsubseteq$-increasing union
    $$\bigcup_{\alpha<\lambda} \operatorname{Coll}(\omega,|\alpha|).$$
	\item The free product 
	$$\bigoplus_{\delta<\lambda} \operatorname{Coll}(\omega,|\delta|).$$	
    
    \end{enumerate}
\end{thm}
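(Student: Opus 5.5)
Both parts will follow from the uniqueness clause of Theorem \ref{uniquenessthm}. It suffices to show that each algebra lies in $\BoolSeq_\lambda$ and is $\Bool_\lambda$-injective.

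\textbf{Membership in $\BoolSeq_\lambda$.} In (1) we have $A_\alpha=\operatorname{Coll}(\omega,|\alpha|)$, where "$\operatorname{Coll}$" is read as a complete algebra. Its size is $2^{|\alpha|}<\lambda$ because $\lambda$ is strongly inaccessible, so $A_\alpha \in \Bool_\lambda$. The chain is $\sqsubseteq$-increasing by hypothesis. Continuity at limits is not automatic: the union at a limit stage is generally not complete. I would therefore reindex, taking $A'_\beta=\bigcup_{\alpha<\beta}A_\alpha$ at limits. This union is a regular subalgebra of every later $A_\gamma$, since a maximal antichain of the union has size $<\lambda$ and already lies in some $A_\alpha\sqsubseteq A_\gamma$. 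A cleaner alternative is to verify injectivity directly for the union $\bigcup_\alpha A_\alpha$, which is how I would phrase the argument anyway.

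In (2), write $F_\beta=\bigoplus_{\delta<\beta}\operatorname{Coll}(\omega,|\delta|)$. These form a continuous chain of size $<\lambda$, and each $F_\beta$ is a regular subalgebra of $F_\gamma$ for $\beta<\gamma$, since $F_\gamma=F_\beta\oplus(\text{rest})$ and factors of free products are regular.

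\textbf{Injectivity.} Let $B\sqsubseteq C\in\Bool_\lambda$ and let $f:B\to\mathbb A$ be regular.

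For (1): $f[B]$ has size $<\lambda$, so $f[B]\subseteq A_\alpha$ for some $\alpha$, and it is regular in $A_\alpha$ because $f[B]\sqsubseteq\mathbb A$ and $A_\alpha\sqsubseteq\mathbb A$. Enlarge $\alpha$ so that $|\alpha|>|C|\ge\dens(C)$. Corollary \ref{injectivityColAlg} then extends $f$ to a regular $\bar f:C\to A_\alpha$, and $\bar f$ remains regular into $\mathbb A$ by transitivity of $\sqsubseteq$.

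For (2): $f[B]$ lies in some $F_\beta$ by the same size argument. We have $F_\beta\sqsubseteq\overline{F_\beta}$, and $|F_\beta|<\lambda$. Choose $\delta\ge\beta$ with $|\delta|>|F_\beta|,|C|$. By Corollary \ref{cor1}, $\overline{F_\beta\oplus\operatorname{Coll}(\omega,|\delta|)}\simeq\operatorname{Coll}(\omega,|\delta|)$. To extend $f$, I would use the algebra $\overline{C\oplus\operatorname{Coll}(\omega,|\delta|)}$ together with Theorem \ref{mainthm}, applied with $F_\beta$ in place of $B$. This identifies $\overline{F_\beta\oplus \operatorname{Coll}(\omega,|\delta|)}$ with $\operatorname{Coll}(\omega,|\delta|)$ while keeping the copy of $F_\beta$ fixed. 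Corollary \ref{injectivityColAlg} then extends $f$ to $C\to\operatorname{Coll}(\omega,|\delta|)$, which is regular, and the completion is harmless here.

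\textbf{Main obstacle.} The image in (2) must land in the non-complete algebra $F_\beta\oplus\operatorname{Coll}(\omega,|\delta|)\subseteq\mathbb A$, not in its completion. The same issue arises in (1) if one insists that the $A_\alpha$ be complete. I would first try to show that $\overline{F_\beta\oplus\operatorname{Coll}(\omega,|\delta|)}\cap\mathbb A$ is a regular subalgebra of $\mathbb A$ that is a union of collapsing algebras. Failing that, the fallback is to show that $\mathbb A$ is isomorphic to a union as in (1) by a back-and-forth argument. This would take $A_\beta$ to be $\operatorname{Coll}$-like subalgebras generated by blocks of factors, each of size $<\lambda$ and closed under the needed joins. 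That reduces (2) to (1).
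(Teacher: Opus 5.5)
\textbf{Part (1).} Your route is the paper's: show membership in $\BoolSeq_\lambda$, get $\Bool_\lambda$-injectivity from Corollary~\ref{injectivityColAlg}, then apply the uniqueness clause of Theorem~\ref{uniquenessthm}. The injectivity half is fine. The $\BoolSeq_\lambda$ half has a genuine gap.

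A maximal antichain of $\bigcup_{\alpha<\beta}A_\alpha$ is guaranteed to lie in a single $A_\alpha$ only when its size is below $\operatorname{cf}(\beta)$, not merely below $\lambda$. In general, the union of a $\sqsubseteq$-chain need not be regular in a later member. For instance, the clopen algebra of $2^\omega$ is a union of finite, hence regular, subalgebras of the measure algebra, but it is not regular in it. Checking injectivity alone does not rescue this, because uniqueness holds only inside $\BoolSeq_\lambda$.

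The paper's proof skips the same point. Its identity $\bigcup_{\alpha\in S\cap\delta}\operatorname{Coll}(\omega,|\alpha|)\simeq\operatorname{Coll}(\omega,\delta)$ can hold only for completions, and it says nothing about regularity in later terms. For an arbitrary chain the claim is in fact false:
\begin{itemize}
\item Let $L_\kappa\simeq\operatorname{Coll}(\omega,\kappa)$ be an increasing chain of complete subalgebras of a complete algebra $L$ (e.g.\ $L_\kappa=\overline{C_{\kappa+1}}\subseteq\overline{\colllambda}$).
\item Let $Q_\kappa\subseteq\mathcal{P}(\lambda+1)$ be the complete atomic subalgebra whose atoms are the singletons $\{\gamma\}$, $\gamma<\kappa$, together with $[\kappa,\lambda]$.
\item Let $D_\kappa$ be the complete subalgebra of $\overline{\mathcal{P}(\lambda+1)\oplus L}$ generated by $Q_\kappa\cup L_\kappa$.
\end{itemize}
Then $D_\kappa\simeq\overline{Q_\kappa\oplus L_\kappa}\simeq\operatorname{Coll}(\omega,\kappa)$ by Theorem~\ref{thm1}, since it has density $\kappa$ and $L_\kappa$ collapses $\kappa$. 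The $D_\kappa$ form a $\sqsubseteq$-chain.

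Fix a limit cardinal $\delta<\lambda$. The family $\{\{\gamma\}:\gamma<\delta\}$ is a maximal antichain of $\bigcup_{\kappa<\delta}D_\kappa$. Indeed, a nonzero element of $D_\kappa$ lies above some $a\wedge l$ with $a$ an atom of $Q_\kappa$ and $l\in L_\kappa^+$, and if $a=[\kappa,\lambda]$ it meets $\{\kappa\}\in Q_{\kappa^+}$. Yet $[\delta,\lambda]\in D_\delta$ is a nonzero element disjoint from all of them.

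Two continuous filtrations by pieces of size $<\lambda$ agree on a club, and the limit cardinals form a club. So any filtration witnessing $\bigcup_\kappa D_\kappa\in\BoolSeq_\lambda$ would have some $\bigcup_{\kappa<\delta}D_\kappa$ as a term regular in the whole algebra, which is impossible. Hence this union is $\Bool_\lambda$-injective (by your own argument) but not isomorphic to $\blim$.

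\textbf{Part (2).} Your $\BoolSeq_\lambda$ argument here is correct. Your route differs from the paper's, which treats (2) as a special case of (1) via $\bigoplus_{\delta\le\kappa}\operatorname{Coll}(\omega,\delta)\simeq\collkappa$. Corollary~\ref{cor1} gives that identity only after completing.

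The obstacle you isolate is fatal, not technical: the literal free product $\mathbb{F}=\bigoplus_{\delta<\lambda}\operatorname{Coll}(\omega,|\delta|)$ is not $\Bool_\lambda$-injective.
\begin{itemize}
\item Take two distinct factors $\mathcal{C}_0,\mathcal{C}_1$, and put $B=\mathcal{C}_0\oplus\mathcal{C}_1\sqsubseteq\mathbb{F}$ and $C=\overline{B}\in\Bool_\lambda$.
\item Pick an infinite maximal antichain $\{x_n\}\subseteq\mathcal{C}_0$ and pairwise distinct $z_n\in\mathcal{C}_1$.
\item Regular embeddings preserve all existing joins. So a regular $g:C\to\mathbb{F}$ extending the inclusion would make $e=\bigvee_n x_n\wedge z_n$ exist in $\mathbb{F}$.
\item Then $e\in\mathcal{C}_0\oplus R$, where $R$ is the free product of $\mathcal{C}_1$ and finitely many other factors. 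So $e=\bigvee_{i<m}u_i\wedge w_i$ for a partition of unity $\{u_i\}\subseteq\mathcal{C}_0$ and some $w_i\in R$.
\item From $e\wedge x_n=x_n\wedge z_n$ and the independence of free factors, $w_i=z_n$ whenever $u_i\wedge x_n\neq 0$. So the $z_n$ take at most $m$ values, a contradiction.
\end{itemize}
The same computation shows that $\mathcal{C}_0\oplus\mathcal{C}_1$ lies in no complete regular subalgebra of $\mathbb{F}$. So your fallback of rewriting $\mathbb{F}$ as a union as in (1) cannot work either.

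What survives is the version built from completions whose limit unions are dense. An example is the L\'evy chain $\bigcup_\beta\overline{C_\beta}$ of Theorem~\ref{thm2}. There $\bigcup_{\beta<\delta}\overline{C_\beta}$ contains the dense set $C_\delta$ of $\overline{C_\delta}$, hence is regular in every later term, and your injectivity argument from (1) applies verbatim.
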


\begin{proof}
	Notice that for any uncountable cardinal $\kappa$ we have
    $$\bigoplus_{\delta\leq \kappa} \operatorname{Coll}(\omega,\delta)\simeq\collkappa,$$
    by Corollary \ref{cor1}. Therefore $2.$ is a special case of $1.$ so we will focus on the proof of $1.$

    First, we will verify that the union
    $$\bigcup_{\alpha<\lambda} \operatorname{Coll}(\omega,|\alpha|)$$
    is indeed in the class $\Bool_\lambda$. This is almost evident, as it is defined as an increasing chain, but unfortunately not a continuous one. In order to refine this union to a continuous one, we will work with the set $S \subseteq \lambda$, consisting of all singular cardinals of countable cofinality. Evidently
    $$\bigcup_{\alpha<\lambda} \operatorname{Coll}(\omega,|\alpha|)=\bigcup_{\alpha\in S} \operatorname{Coll}(\omega,|\alpha|),$$
    so it is sufficient to argue that
    
    $$\bigcup_{\alpha \in S\cap \delta} \operatorname{Coll}(\omega,|\alpha|)\simeq\col(\omega,\delta)$$  
    for every $\delta \in S$. 
    Since $\operatorname{cof}(\delta)=\omega$, if every cardinal below $\delta$ becomes countable, then so does $\delta$ itself. Moreover, we have
    
    $$\dens(\bigcup_{\alpha<\delta} \operatorname{Coll}(\omega,|\alpha|)\leq \bigcup\limits_{\alpha<\delta}|\alpha|=\delta.$$ 
    The claim follows from Theorem \ref{thm1}.

    In the class of Boolean algebras, universality clearly follows from injectivity, so we need only to prove that the union is $\Bool_\lambda$-injective. This, however, follows immediately from the injectivity of the Collapsing Algebras (Corollary \ref{injectivityColAlg}).
\end{proof}

The "absorption property" extends to the class $\BoolSeq_\lambda$:

\begin{prop}\label{absorption}
    If $B\in \BoolSeq_\lambda$, then $B\oplus \blim\simeq \blim$.
\end{prop}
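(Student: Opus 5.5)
The plan is to apply the uniqueness part of Theorem \ref{uniquenessthm}. We show that $B\oplus \blim$ lies in $\BoolSeq_\lambda$ and is $\Bool_\lambda$-injective; uniqueness then yields $B\oplus\blim\simeq\blim$. The free product itself need not be complete, so everything is done at the level of the algebra $B\oplus\blim$, and completions are used only inside the pieces.

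\textbf{Step 1: a continuous chain.} Write $B=\bigcup_{\alpha<\lambda}B_\alpha$ as in Definition \ref{classes}. By Theorem \ref{structurethm}, write $\blim=\bigcup_{\alpha<\lambda}C_\alpha$ as a continuous $\sqsubseteq$-chain in $\Bool_\lambda$; for instance, refine the union of collapsing algebras along the set $S$ as in that proof. Put $D_\alpha=B_\alpha\oplus C_\alpha$, viewed inside $B\oplus\blim$.

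\begin{itemize}
\item Each $D_\alpha$ has size $<\lambda$.
\item The chain is continuous, since free products commute with increasing unions.
\item Regularity $D_\alpha\sqsubseteq D_\beta$ follows from the standard fact that $A\sqsubseteq A'$ and $E\sqsubseteq E'$ imply $A\oplus E\sqsubseteq A'\oplus E'$. One sees this via the dense sets $A^+\times E^+$, which are regular suborders of $A'^+\times E'^+$.
\end{itemize}

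Hence $B\oplus\blim\in\BoolSeq_\lambda$.

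\textbf{Step 2: injectivity.} Fix $F\sqsubseteq G$ in $\Bool_\lambda$ and a regular embedding $f:F\to B\oplus\blim$.

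\begin{itemize}
\item Since $|F|<\lambda$ and $\lambda$ is regular, $f[F]\subseteq D_\alpha$ for some $\alpha$. Regularity of $f[F]$ in the whole algebra passes down to $D_\alpha$.
\item Enlarging $\alpha$, we may take $C_\alpha=\col(\omega,\kappa)$ for some cardinal $\kappa<\lambda$ with $\dens(G),|B_\alpha|<\kappa$. By Corollary \ref{cor1}, $\overline{D_\alpha}\simeq\col(\omega,\kappa)$.
\item Corollary \ref{injectivityColAlg} then extends $f$ to a regular embedding $g:G\to\overline{D_\alpha}\cong\col(\omega,\kappa)$.
\end{itemize}

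\textbf{Step 3: pulling back into the algebra (main obstacle).} The image $g[G]$ lies in the completion of $D_\alpha$, not in $B\oplus\blim$ itself. The key point is that the $\blim$ factor absorbs it, through two facts.

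\begin{itemize}
\item By Theorem \ref{mainthm}, applied to the regular subalgebra $\overline{B_\alpha}$ of $\overline{D_\alpha}\cong\col(\omega,\kappa)$, there is an isomorphism
$$\overline{D_\alpha}\simeq\overline{B_\alpha\oplus\col(\omega,\kappa)}$$
fixing $B_\alpha$ canonically.
\item Choosing $\beta>\alpha$ with $C_\beta\supseteq\col(\omega,2^\kappa)$, the whole complete algebra $\col(\omega,\kappa)$ embeds regularly into $C_\beta$ over $C_\alpha$. This holds by the injectivity of $\blim$, since $|\col(\omega,\kappa)|=2^\kappa<\lambda$ because $\lambda$ is strongly inaccessible. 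Then $\overline{D_\alpha}$ maps regularly into $B_\alpha\oplus C_\beta\subseteq D_\beta$.
\end{itemize}

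The hardest part is arranging this so that the resulting map is the identity on $D_\alpha$, so that composing with $g$ genuinely extends $f$. I would first try to get this by applying Theorem \ref{mainthm} with $D_\alpha$ in the role of $B$, which gives canonical embeddings compatible with $D_\alpha$. Injectivity of $\blim$ relative to $C_\alpha$ would then handle the remaining free-product factor. Once this is in place, Theorem \ref{uniquenessthm} finishes the proof.
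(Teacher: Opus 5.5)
Your Step~3 is a genuine gap, and it cannot be closed. Suppose $e\colon\overline{D_\alpha}\to B\oplus\blim$ were a regular embedding that is the identity on $D_\alpha$. Then $e[\overline{D_\alpha}]$ would be a complete regular subalgebra containing $D_\alpha$. Regular subalgebras preserve every supremum that exists in them: if $s=\bigvee^E S$ and $u$ is an upper bound of $S$ with $s\wedge -u\neq 0$, take a maximal antichain of $E$ inside $\{t\in E^+ : t\le x \text{ for some } x\in S\}\cup\{t\in E^+ : t\wedge s=0\}$; it is maximal in the big algebra, yet no member meets $s\wedge -u$. So every subset of $D_\alpha$ would have a supremum in $B\oplus\blim$. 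That is false in a free product. Take pairwise disjoint nonzero $b_n\in B_\alpha$ and pairwise distinct $c_n\in C_\alpha$, and suppose $W=\bigvee_n(b_n\wedge c_n)$ existed. Minimality gives $W\wedge b_n=b_n\wedge c_n$, since otherwise $W\wedge-(b_n\wedge -c_n)$ is a smaller upper bound. Write $W=\bigvee_{l<m}(p_l\wedge z_l)$ with $\{p_l\}$ a finite partition of unity in $B$ and $z_l\in\blim$, and choose $l$ with $p_l\wedge b_n\neq 0$. Independence of the factors then forces $c_n=z_l$, so only finitely many $c_n$ could be distinct. (A regular copy of $\overline{D_\alpha}\simeq\col(\omega,\kappa)$ does sit inside $C_\beta$, but it cannot be made to fix $D_\alpha$.) Even the weaker requirement, fixing only $f[F]$, fails. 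Let $F=D_\alpha$, let $f$ be the inclusion (regular by your Step~1), and let $G$ be the subalgebra of $\overline{D_\alpha}$ generated by $D_\alpha$ and $s=\bigvee_n(b_n\wedge c_n)$. Then $F\sqsubseteq G\in\Bool_\lambda$, but any regular extension $G\to B\oplus\blim$ would have to send $s$ to a supremum that does not exist. So $B\oplus\blim$ is not $\Bool_\lambda$-injective whenever $B$ is infinite, and Theorem~\ref{uniquenessthm} can never be invoked.

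This is not only a defect of your route. The same trick inside $\blim$, which is injective, shows that $\blim$ is $<\lambda$-complete: adjoin $\bigvee S$, computed in $\overline{A}$ for a chain member $A\supseteq S$, and apply injectivity to $A\sqsubseteq G$. By contrast, the computation above shows that a free product of two infinite Boolean algebras is never $\sigma$-complete. Hence the statement as literally written fails for every infinite $B\in\BoolSeq_\lambda$, for instance $B=\blim$. The paper's proof contains the matching error. It replaces $B_\alpha\oplus\col(\omega,|B_\alpha|)$ by $\col(\omega,|B_\alpha|)$, whereas Corollary~\ref{cor1} only identifies the completion $\overline{B_\alpha\oplus\col(\omega,|B_\alpha|)}$ with $\col(\omega,|B_\alpha|)$. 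What your Steps~1--2 essentially prove, with bookkeeping so that cofinally many $\overline{D_\alpha}$ are collapsing algebras of arbitrarily large density below $\lambda$, is that the continuous chain with $E_{\alpha+1}=\overline{D_{\alpha+1}}$ and unions at limits is $\Bool_\lambda$-injective. There the extension $g$ lands in $\overline{D_\alpha}$, which now belongs to the algebra, and $|\overline{D_\alpha}|<\lambda$ by strong inaccessibility. This yields a copy of $\blim$ in which $B\oplus\blim$ is dense, i.e.\ the correct form $\overline{B\oplus\blim}\simeq\overline{\blim}$ of the absorption property.
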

\begin{proof}
    Fix a continuous decomposition $B=\bigcup\limits_{\alpha<\lambda}B_\alpha$, for some Boolean algebras $B_\alpha \in \Bool_\lambda$. Then we have
    $$B \oplus \blim \simeq (\bigcup_{\alpha<\lambda}B_\alpha) \oplus (\bigcup_{\alpha<\lambda} \operatorname{Coll}(\omega,|B_\alpha|)) =$$
	$$\bigcup_{\alpha<\lambda}B_\alpha \oplus \operatorname{Coll}(\omega,|B_\alpha|)=\bigcup_{\alpha<\lambda}\operatorname{Coll}(\omega,|B_\alpha|)\simeq \blim.$$
\end{proof}

\subsection{L\'evy collapse}

\begin{defin}
    The \emph{L\'evy collapse} of an uncountable cardinal $\lambda$ is the partial order

    \begin{align*} \colllambda=\{p:\dom(p)\longrightarrow \lambda\mid \dom(p) \in [\lambda \times \omega]^{<\omega},\\
    \forall (\alpha,n)\in\dom(p) \; p(\alpha,n)<\alpha\}.
    \end{align*}
\end{defin}

Directly from the definition, we can decompose the L\'evy collapse as an increasing union of regular subsets that collapse bigger and bigger cardinals, specifically
$$\colllambda=\bigcup_{\beta<\lambda} C_\beta,$$
where 
    \begin{align*} C_\beta=\{p:\dom(p)\longrightarrow \beta\mid            \dom(p) \in [\beta \times \omega]^{<\omega},\;
     \forall (\alpha,n)\in\dom(p) \; p(\alpha,n)<\alpha\}.
    \end{align*}

\begin{prop}\label{prop1}
    If $\kappa<\lambda$ is an uncountable cardinal, then $C_\kappa=\col(\omega,<\kappa)$, and $\overline{C_{\kappa+1}}\simeq \col(\omega,\kappa)$.
\end{prop}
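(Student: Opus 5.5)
The first claim is a direct comparison of definitions, and the second reduces to the characterization of the collapsing algebra in Theorem \ref{thm1}.

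\textbf{First claim.} By definition $C_\kappa$ consists of finite partial functions $p$ with $\dom(p)\subseteq \kappa\times\omega$ and $p(\alpha,n)<\alpha$. This is verbatim the definition of $\col(\omega,<\kappa)$ with $\lambda$ replaced by $\kappa$. The range condition $p:\dom(p)\to\kappa$ is automatic, since $p(\alpha,n)<\alpha<\kappa$. So the first equality is literal and needs no argument beyond unpacking the definitions.

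\textbf{Second claim.} By Theorem \ref{thm1}, it suffices to show two things: that $\dens(\overline{C_{\kappa+1}})=\kappa$, and that $C_{\kappa+1}\Vdash|\kappa|=\omega$.

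\emph{Collapse.} Forcing with $C_{\kappa+1}$ adds the function $g(n)=\bigcup G(\kappa,n)$ on the coordinate $\alpha=\kappa$. A standard density argument shows that $g:\omega\to\kappa$ is surjective: given $p$ and $\xi<\kappa$, extend $p$ by $(\kappa,n)\mapsto\xi$ for some $n$ with $(\kappa,n)\notin\dom(p)$. Hence $\kappa$ is forced to be countable.

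\emph{Density.} $C_{\kappa+1}$ is a dense subset of its completion, and $|C_{\kappa+1}|=|[(\kappa+1)\times\omega\times\kappa]^{<\omega}|=\kappa$, since $\kappa$ is infinite. This gives $\dens\le\kappa$. For the lower bound, a complete Boolean algebra with a dense subset of size $\mu<\kappa$ has the $\mu^+$-c.c. Since $\kappa$ is uncountable and $\mu^+\le\kappa$, such an algebra preserves $\kappa$ as an uncountable cardinal. Alternatively, it cannot collapse $\kappa$ to $\omega$, because there are at most $\mu^{\omega}$ nice names for functions $\omega\to\kappa$… The chain-condition route is cleaner. It shows that a poset of density $\mu<\kappa$ preserves all cardinals $\ge\mu^+$, and $\kappa\ge\mu^+$. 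This contradicts the collapse established above, so $\dens=\kappa$.

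Applying Theorem \ref{thm1} then yields $\overline{C_{\kappa+1}}\simeq\col(\omega,\kappa)$.

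The main subtlety is making sure Theorem \ref{thm1} applies exactly as stated. The density must be exactly $\kappa$, not merely at most $\kappa$, and that is where the chain-condition lower bound is needed. Everything else is routine.
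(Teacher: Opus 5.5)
Your proof is correct and follows the paper's route: the first claim by unpacking the definitions, and the second via Theorem \ref{thm1}, using $|C_{\kappa+1}|=\kappa$ together with the density argument on coordinate $\kappa$ to show that $\kappa$ is collapsed. Your chain-condition argument that $\dens(\overline{C_{\kappa+1}})$ cannot be below $\kappa$ makes explicit a step the paper leaves implicit; you should simply delete the unfinished nice-names aside, whose bound of $\mu^{\omega}$ names is not correct as stated.
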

\begin{proof}
    The fact that $C_\kappa=\col(\omega,<\kappa)$ follows right from the definition. To see that $\overline{C_{\kappa+1}}\simeq \col(\omega,\kappa)$, we use Theorem \ref{thm1}. Evidently $|C_{\kappa+1}|=\kappa$, so it is sufficient to show that $C_{\kappa+1}$ collapses $\kappa$ to a countable ordinal. To see this, let $\dot{G}$ be a $C_{\kappa+1}$-name for a generic filter. Then a standard density argument shows that
    $$C_{\kappa+1}\Vdash \text{``}\bigcup \{\langle n,p(\kappa,n)\rangle \mid p \in \dot{G}\}\text{ is a function from $\omega$ onto $\kappa$}.\text{"}$$
\end{proof}

\begin{thm}\label{thm2}

If $\lambda$ is strongly inaccessible\footnote{For this argument it is relevant only that $\lambda$ is uncountable and limit, but $\mathbb \blim$ is defined only for strongly inaccessible $\lambda$.}, then there is an isomorphic copy of $\blim$ between $\colllambda$ and $\overline{\colllambda}$. In particular $\overline{\blim}\simeq \overline{\colllambda}$.
\end{thm}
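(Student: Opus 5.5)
We build, inside $\overline{\colllambda}$, the $\sqsubseteq$-increasing union from Theorem \ref{structurethm}(1). For each $\beta<\lambda$ let $D_\beta$ be the complete subalgebra of $\overline{\colllambda}$ generated by $C_\beta$, i.e.\ the image of $\overline{C_\beta}$ under the complete embedding given by Proposition \ref{extension}. This embedding exists because $C_\beta$ is a regular suborder of $\colllambda$: compatibility of conditions in $C_\beta$ is the same in $\colllambda$, and $p\mapsto p\restriction(\beta\times\omega)$ is a projection. The same reasoning gives $D_\alpha\sqsubseteq D_\beta$ for $\alpha<\beta$. We then set
$$\mathbb B=\bigcup_{\kappa<\lambda,\ \kappa \text{ a cardinal}} D_{\kappa+1}.$$
This clearly contains $\colllambda$, since every condition lies in some $C_{\kappa+1}$, and it is contained in $\overline{\colllambda}$.

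Next we identify the pieces. By Proposition \ref{prop1}, $D_{\kappa+1}\simeq\overline{C_{\kappa+1}}\simeq\operatorname{Coll}(\omega,\kappa)$ for every uncountable cardinal $\kappa<\lambda$. The finitely many small indices (countable $\kappa$) can be dropped, since the union is cofinal. Re-indexing by $\alpha<\lambda$, for each $\alpha$ we take the piece $D_{\kappa+1}$ with $\kappa=|\alpha|$ when $|\alpha|$ is uncountable. This makes $\mathbb B$ an $\sqsubseteq$-increasing union $\bigcup_{\alpha<\lambda}\operatorname{Coll}(\omega,|\alpha|)$, with repetitions allowed for equal pieces; repetitions only duplicate terms and do not change the union. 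We must check that each inclusion $D_{\kappa+1}\sqsubseteq D_{\mu+1}$ is regular, which holds since both are complete subalgebras of $\overline{\colllambda}$ with one contained in the other. We also check that $\mathbb B$ is a subalgebra, which holds because it is a directed union of subalgebras. Theorem \ref{structurethm}(1) then gives $\mathbb B\simeq\blim$.

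Finally, since $\colllambda\subseteq\mathbb B\subseteq\overline{\colllambda}$ and $\colllambda$ is dense in $\overline{\colllambda}$, the set $\mathbb B^+$ is dense in $\overline{\colllambda}$. Hence $\overline{\blim}\simeq\overline{\mathbb B}\simeq\overline{\colllambda}$.

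The main obstacle is matching the exact hypotheses of Theorem \ref{structurethm}(1). One point is that the theorem must really be applied to \emph{any} increasing union of copies of $\operatorname{Coll}(\omega,|\alpha|)$ with regular inclusions; its proof handles non-continuity by passing to indices in $S$, and our chain is of exactly the same form. The other point is the justification that each $D_\beta$ is a regular subalgebra of $\overline{\colllambda}$, for which we use the projection argument above. Only the limit-cardinal nature of $\lambda$ is used, in order to guarantee that the $D_{\kappa+1}$ exhaust $\colllambda$, which matches the footnote.
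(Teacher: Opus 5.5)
Your proposal is correct and is essentially the paper's proof. You realize each $\overline{C_\beta}$ as a complete subalgebra of $\overline{\colllambda}$, use that $\lambda$ is a limit cardinal to rewrite the union as $\bigcup_{\kappa}\overline{C_{\kappa+1}}\simeq\bigcup_{\kappa}\operatorname{Coll}(\omega,\kappa)$, and invoke Theorem~\ref{structurethm}(1); your projection argument for $C_\beta\sqsubseteq\colllambda$ and the density argument for the completions fill in steps the paper leaves implicit. One cosmetic slip: the dropped indices with $\kappa\le\omega$ are countably many, not finitely many, which changes nothing since the union is increasing.
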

\begin{proof}
    Right from the definition we see that $C_\beta \subseteq \colllambda$, and in fact $C_\beta \sqsubseteq \colllambda$.  Therefore we can assume that each completion $\overline{C_\beta}$ is taken inside $\overline{\colllambda}$. Then we have a sequence of inclusions
    $$\colllambda = \bigcup_{\beta<\lambda}C_\beta \s \bigcup_{\beta<\lambda}\overline{C_\beta} \s \overline{\colllambda}.$$
    Since $\lambda$ is limit,
    $$\bigcup_{\beta<\lambda}\overline{C_\beta}=\bigcup_{\kappa<\lambda}\overline{C_{\kappa+1}}\simeq \bigcup_{\kappa <\lambda} \coll,$$ 
    where $\kappa$ runs over uncountable cardinals. By Theorem \ref{structurethm}, this union is isomorphic to $\blim$.

\end{proof}

\section{Universality of the group $\aut{\blim}$}

\begin{prop} \label{prop1}
	For any pair of Boolean algebras $B,C$, there exists an embedding
	$$\aut{B} \longrightarrow \aut{B \oplus C}.$$
\end{prop}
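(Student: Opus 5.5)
The plan is to send an automorphism $\varphi$ of $B$ to the automorphism $\varphi\oplus \operatorname{id}_C$ of the free product. The free product $B\oplus C$ comes with canonical embeddings $\iota_B:B\to B\oplus C$ and $\iota_C:C\to B\oplus C$. It has the universal property: for any Boolean algebra $D$ and homomorphisms $f:B\to D$, $g:C\to D$, there is a unique homomorphism $f\oplus g:B\oplus C\to D$ with $(f\oplus g)\circ\iota_B=f$ and $(f\oplus g)\circ\iota_C=g$. In the Stone-space picture used earlier, $B\oplus C$ is the algebra of clopen subsets of $X_B\times X_C$, and $\varphi$ corresponds to a homeomorphism $\varphi^*$ of $X_B$. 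So one could equally define $\Phi(\varphi)$ as the clopen-algebra automorphism induced by $(\varphi^*)^{-1}\times\operatorname{id}_{X_C}$. I would present the algebraic version and mention the topological one as a sanity check.

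First, for $\varphi\in\aut(B)$, define $\Phi(\varphi)=(\iota_B\circ\varphi)\oplus\iota_C$. This is the unique endomorphism of $B\oplus C$ acting as $\varphi$ on the copy of $B$ and as the identity on the copy of $C$.

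Second, verify that $\Phi$ is a homomorphism of monoids, $\Phi(\varphi\circ\psi)=\Phi(\varphi)\circ\Phi(\psi)$ and $\Phi(\operatorname{id}_B)=\operatorname{id}_{B\oplus C}$. Both sides of each equation agree on $\iota_B[B]\cup\iota_C[C]$, which generates $B\oplus C$, so uniqueness in the universal property gives equality. Consequently $\Phi(\varphi^{-1})$ is a two-sided inverse of $\Phi(\varphi)$. Hence each $\Phi(\varphi)$ is an automorphism, and $\Phi$ is a group homomorphism into $\aut(B\oplus C)$.

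Third, check injectivity. If $\Phi(\varphi)=\operatorname{id}$, then $\iota_B\circ\varphi=\iota_B$, and since $\iota_B$ is injective, $\varphi=\operatorname{id}_B$.

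The only step needing care is injectivity of $\iota_B$, i.e.\ that $B$ really embeds in $B\oplus C$. This fails if $C$ is the degenerate one-element algebra, since then $B\oplus C$ is trivial. I would therefore assume $C$ nontrivial, as is implicit in the paper. Injectivity then follows from the clopen-rectangle description: $\iota_B(b)=b\times X_C$ is nonempty whenever $b\neq 0$, because $X_C\neq\emptyset$. The same description also gives existence of the free product with its universal property, so no other obstacle remains.
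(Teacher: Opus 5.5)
Your proof is correct and is the algebraic (Stone-dual) form of the paper's argument: the paper extends the homeomorphism $h$ of $X_B$ to $h\times\operatorname{id}$ on $X_B\times X_C$, which is the same map as your $\varphi\oplus\operatorname{id}_C$, and your uniqueness argument via the universal property just makes explicit the homomorphism and injectivity checks the paper leaves unverified. Your caveat that $C$ must be nontrivial is also right and applies equally to the paper's proof, since for $X_C=\emptyset$ every $h\times\operatorname{id}$ is the empty map, so the paper's map is injective only when $X_C\neq\emptyset$.
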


\begin{proof}
	Let us denote by $X_B$ and $X_C$ the Stone spaces of $B$ and $C$ respectively.
	Then $B \oplus C$ is the algebra of clopen subsets of $X_B \times X_C$. If $h:X_B \longrightarrow X_B$ is a homeomorphism, we can extend it to
    
	$$\bar{h}=h \times \operatorname{id} :X_B \times X_C \longrightarrow X_B \times X_C.$$
	
	The mapping $\bar{h}$ induces an automorphism of $B\oplus C$ via the Stone duality. More precisely, the function given by
    
	$$F \mapsto \bar{h}^{-1}[F],$$
    
	for $F\in \operatorname{clop}(X_B \times X_C)$ is an automorphism of $B\oplus C$.

    The mapping $h  \mapsto \bar{h}$ is a group homomorphism, that is moreover injective.	
\end{proof}

\begin{thm}\label{topuniversality}
	If $B\in \BoolSeq_\lambda$, for a strongly inaccessible $\lambda$, then there exists a group embedding
	$$\aut{B} \longrightarrow \aut{\blim}.$$
\end{thm}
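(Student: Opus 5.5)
The plan is to combine the previous proposition (the embedding $\aut{B}\longrightarrow\aut{B\oplus C}$ given by $h\mapsto h\times\operatorname{id}$ on Stone spaces) with the absorption property, Proposition \ref{absorption}. Take $B\in\BoolSeq_\lambda$ and set $C=\blim$. The previous proposition gives an injective group homomorphism
$$\Phi:\aut{B}\longrightarrow\aut{B\oplus\blim},\qquad \Phi(h)=\bar h .$$
Proposition \ref{absorption} provides an isomorphism of Boolean algebras $\psi:B\oplus\blim\longrightarrow\blim$. Conjugation by $\psi$,
$$c_\psi:\aut{B\oplus\blim}\longrightarrow\aut{\blim},\qquad g\mapsto \psi\circ g\circ\psi^{-1},$$
is then a group isomorphism. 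The composition $c_\psi\circ\Phi$ is the desired group embedding $\aut{B}\longrightarrow\aut{\blim}$.

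The only points requiring care are already contained in the cited results, but I would re-check two of them.

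\textbf{Injectivity of $\Phi$.} If $h\neq\operatorname{id}$ on $X_B$, pick $x$ with $h(x)\neq x$. Then $\bar h$ moves every point $(x,y)$, so $\bar h$ is not the identity. Hence the induced automorphism of $\operatorname{clop}(X_B\times X_C)$ is nontrivial, since clopen rectangles separate points. Homomorphy follows because $(h_1h_2)\times\operatorname{id}=(h_1\times\operatorname{id})(h_2\times\operatorname{id})$ and $F\mapsto\bar h^{-1}[F]$ is contravariant. To get a covariant homomorphism one composes with inversion, or uses $F\mapsto\bar h[F]$; this is harmless either way.

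\textbf{The absorption isomorphism.} I expect this to be the main obstacle. Its proof in Proposition \ref{absorption} requires the free product of two continuous chains of regular subalgebras to be the union of the componentwise free products. It also requires each $B_\alpha\oplus\operatorname{Coll}(\omega,|B_\alpha|)$ to be identified with $\operatorname{Coll}(\omega,|B_\alpha|)$ compatibly along the chain, via Theorem \ref{mainthm} and Theorem \ref{structurethm}.

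However, for the present theorem only the existence of some isomorphism $B\oplus\blim\simeq\blim$ is needed. The statement of Proposition \ref{absorption} can therefore be used as a black box, and no compatibility of the automorphisms with any chain decomposition is required.
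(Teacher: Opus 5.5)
\textbf{There is a genuine gap: the isomorphism $B\oplus\blim\simeq\blim$ you use as a black box is false for every infinite $B$.} Your route is exactly the paper's. You embed $\aut(B)$ into $\aut(B\oplus\blim)$ via $h\mapsto h\times\operatorname{id}$, then conjugate by an isomorphism $B\oplus\blim\simeq\blim$ taken from Proposition~\ref{absorption}. Your checks of injectivity and covariance are fine.

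The reason the black box fails is that $\sigma$-completeness distinguishes the two algebras.
\begin{itemize}
\item Every $S\subseteq\blim$ with $|S|<\lambda$ has a supremum in $\blim$. Such $S$ lies in some $F\sqsubseteq\blim$ with $|F|<\lambda$. By $\Bool_\lambda$-injectivity the inclusion $F\hookrightarrow\blim$ extends to a regular embedding of $\overline{F}$, which has size at most $2^{|F|}<\lambda$. Suprema computed in a regular subalgebra remain suprema in the whole algebra. (Equivalently, use the representation in Theorem~\ref{structurethm} as an increasing union of complete regular subalgebras.)
\item A free product of two infinite Boolean algebras is never $\sigma$-complete. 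Take antichains $\{b_n\}\subseteq B^+$ and $\{c_n\}\subseteq\blim^+$, and suppose $s=\bigvee_n(b_n\wedge c_n)$ exists in $B\oplus\blim$. For $n\neq m$, $b_n\wedge c_m$ is disjoint from every $b_k\wedge c_k$, so leastness of $s$ gives $s\wedge b_n\wedge c_m=0$. But $s$ is a finite union of clopen rectangles $u_i\times v_i$ in $X_B\times X_{\blim}$. Pick $x_n\in b_n$ and $y_n\in c_n$. By pigeonhole, one rectangle contains $(x_n,y_n)$ and $(x_m,y_m)$ for some $n\neq m$. It then also contains $(x_n,y_m)\in b_n\times c_m$, a contradiction.
\end{itemize}
So the step you flagged as the main obstacle is not merely delicate but wrong. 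Corollary~\ref{cor1} only gives $\overline{B_\alpha\oplus\col(\omega,\kappa)}\simeq\col(\omega,\kappa)$, never an isomorphism of the non-complete free product itself. The paper's one-line proof inherits exactly this gap.

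To get the theorem you need an $\aut(B)$-invariant algebra that really is isomorphic to $\blim$. A possible repair, sketched and not fully checked:
\begin{itemize}
\item By Proposition~\ref{extension}, the automorphism of $B\oplus\blim$ induced by each $h\in\aut(B)$ extends uniquely to $\overline{B\oplus\blim}$.
\item Let $W$ be the union of the completions, taken inside $\overline{B\oplus\blim}$, of all regular subalgebras of $B\oplus\blim$ of size $<\lambda$. The extended automorphisms map $B\oplus\blim$ onto itself, so they permute these subalgebras and their completions and hence preserve $W$.
\item $W$ contains the copy of $B$, since each $B_\alpha$ is such a subalgebra. So restricting to $W$ still gives a group embedding.
\item With continuous decompositions $B=\bigcup_\alpha B_\alpha$ and $\blim=\bigcup_\alpha F_\alpha$, one has $W=\bigcup_\alpha\overline{B_\alpha\oplus F_\alpha}$.
\end{itemize}
Two things then remain to be checked.
\begin{itemize}
\item $W\in\BoolSeq_\lambda$. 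At a limit $\delta$, the union $\bigcup_{\gamma<\delta}\overline{B_\gamma\oplus F_\gamma}$ is dense in $\overline{B_\delta\oplus F_\delta}$, hence regular in $W$.
\item $W$ is $\Bool_\lambda$-injective. Use injectivity of $\blim$ to place a copy of $\overline{F_\alpha\oplus\col(\omega,\mu)}$ over $F_\alpha$ inside $\blim$, then apply Corollary~\ref{cor1} and Corollary~\ref{injectivityColAlg}.
\end{itemize}
Uniqueness in Theorem~\ref{uniquenessthm} would then yield $W\simeq\blim$.
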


\begin{proof}
	
	An immediate consequence of Proposition \ref{prop1} and Proposition \ref{absorption}.
	
\end{proof}

It would be desirable to have a better understanding of the class $\BoolSeq_\lambda$, at least for strongly inaccessible cardinals. We will prove that this class contains all Boolean algebras that do not have antichains of size $\lambda$.  Recall that the saturation of a Boolean algebra $B$, denoted by $\operatorname{sat}{B}$, is the smallest cardinal $\delta$, such that $B$ has no antichain of size $\delta$.

\begin{prop}
	If $\kappa$ is an uncountable regular cardinal and $\operatorname{sat}{B}<\kappa$, then $B\in \BoolSeq_\kappa$.
\end{prop}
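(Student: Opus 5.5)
The plan is to build the chain from elementary submodels. As stated, the proposition needs $|B|\le\kappa$, since every algebra in $\BoolSeq_\kappa$ has size at most $\kappa$; I will assume this throughout. If $|B|<\kappa$ the constant chain $A_\alpha=B$ already works, so assume $|B|=\kappa$ and write $\mu=\operatorname{sat}{B}<\kappa$. Fix a large regular $\theta$ and a well-order of $H(\theta)$. I will build a $\subseteq$-increasing, continuous sequence $\langle M_\alpha\mid\alpha<\kappa\rangle$ of elementary submodels of $(H(\theta),\in,\triangleleft)$ such that $B\in M_0$, $|M_\alpha|<\kappa$, $M_\alpha\cap\kappa\in\kappa$, and $B=\bigcup_\alpha (B\cap M_\alpha)$. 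The candidate chain is then $A_\alpha=B\cap M_\alpha$. Each $A_\alpha$ is a subalgebra of size $<\kappa$, the sequence is increasing and continuous, and it exhausts $B$ after fixing an enumeration $\{b_\xi\mid \xi<\kappa\}\in M_0$ of $B$ and using $M_\alpha\cap\kappa\in\kappa$. Regularity $A_\alpha\sqsubseteq A_\beta$ for $\alpha<\beta$ will follow from $A_\alpha\sqsubseteq B$, since a maximal antichain of $A_\alpha$ that is maximal in $B$ is maximal in $A_\beta$.

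The central step is to show $A=B\cap M\sqsubseteq B$ for the models $M$ in the chain. Take a maximal antichain $X$ of $A$. Every antichain of $B$ has size $<\mu$, so $|X|<\mu$. If $X\in M$, elementarity finishes the argument. The statement ``$X$ is a maximal antichain of $B$'' is first order over $H(\theta)$ with parameters $X,B$. If it failed, there would be a $b\in B^+$ disjoint from every member of $X$, and by elementarity such a $b$ could be found in $M$. Then $b\in A^+$, contradicting maximality of $X$ in $A$. So the whole proof reduces to arranging $[M\cap B]^{<\mu}\subseteq M$.

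For models $M_{\alpha+1}$ at successor stages, and at limits of cofinality $\ge\mu$, I will build in this closure. At a successor stage I close $M_\alpha\cup[M_\alpha]^{<\mu}$ under Skolem functions. At a limit $\alpha$ with $\operatorname{cf}\alpha\ge\mu$ the closure is automatic: any $X\in[M_\alpha]^{<\mu}$ lies in some $M_\beta$ with $\beta<\alpha$, hence $X\in M_{\beta+1}$. Keeping $|M_\alpha|<\kappa$ requires $\nu^{<\mu}<\kappa$ for $\nu<\kappa$. This holds when $\kappa$ is strongly inaccessible, which is the case used for $\blim$, and I would add it as a hypothesis if it is not available.

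The main obstacle is the limits $\alpha$ with $\operatorname{cf}\alpha<\mu$, where continuity forces $A_\alpha=\bigcup_{\beta<\alpha}A_\beta$. There I would argue directly that a continuous union of regular subalgebras of $B$ is regular in $B$, using $\operatorname{sat}{B}<\kappa$ again. Given a maximal antichain $X$ of $A_\alpha$ and some $b\in B^+$, I would take for each $\beta<\alpha$ a reduction of $b$ into $A_\beta$, meaning an $a_\beta\in A_\beta^+$ such that every nonzero $a'\le a_\beta$ in $A_\beta$ meets $b$. I would then try to combine these reductions, together with the fact that $X$ has fewer than $\mu$ elements spread over a cofinal sequence of length $\operatorname{cf}\alpha$, to find some $x\in X$ compatible with $b$. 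If this direct argument fails, my fallback is to choose the cofinal sequence so that every limit point has cofinality $\ge\mu$. I would index the chain only along a club of ordinals of large cofinality and re-enumerate it, verifying continuity by hand. I expect this limit-stage issue to be where the real work lies.
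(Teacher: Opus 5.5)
Your successor-stage argument is the same as the paper's. The paper closes $B$ under a map $W:[B]^{<\delta}\to B^+$ that picks an element disjoint from a given small set whenever one exists, and observes that $W$-closed subalgebras are regular; your models with $[M\cap B]^{<\mu}\subseteq M$ do the same job through elementarity. Your caveats $|B|\le\kappa$ and $\nu^{<\mu}<\kappa$ are points the paper's sketch passes over, and the second is genuinely needed. After adding $\aleph_2$ Cohen reals to a model of CH, the measure algebra of $2^\omega$ is not in $\BoolSeq_{\aleph_2}$ even without continuity: some stage of a chain would contain all clopen sets and, being regular, would be dense, while that algebra has density $\aleph_2$ there.

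The genuine gap is the one you flag, the limits of cofinality $<\mu$, and neither of your plans closes it. The fallback is impossible: there is no club of ordinals of cofinality $\ge\mu$, and in any continuous chain of length $\kappa$, however re-indexed, the stage with index $\omega$ must be the union of the countably many earlier stages. The direct plan fails because an increasing $\omega$-union of regular, even complete, subalgebras of a ccc algebra need not be regular. Let $\mathcal{R}_\kappa$ be the measure algebra of $2^\kappa$, and let $\mathcal{R}_\xi\subseteq\mathcal{R}_\kappa$ consist of the elements depending only on coordinates below $\xi$. Fix limit ordinals $\xi_0<\xi_1<\cdots$, and let $y_n$ be a cylinder of measure $2^{-n-2}$ on finitely many coordinates in $[\xi_n,\xi_{n+1})$. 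The elements $x_n=y_n\wedge\bigwedge_{j<n}(1-y_j)$ form a maximal antichain of $\bigcup_n\mathcal{R}_{\xi_n}$: a nonzero $a\in\mathcal{R}_{\xi_m}$ is independent of $y_m$, hence meets $y_m\le\bigvee_{j\le m}x_j$. Yet $1-\bigvee_n y_n$ has measure at least $1/2$ and is disjoint from every $x_n$.

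So no argument can fill the gap: with the continuity clause in the definition of $\BoolSeq_\kappa$, the statement is false. Take $\kappa$ strongly inaccessible and $B=\mathcal{R}_\kappa$, so $|B|=\kappa^{\aleph_0}=\kappa$ and $\operatorname{sat}B=\aleph_1<\kappa$. Let $\langle A_\alpha\mid\alpha<\kappa\rangle$ be any continuous witnessing chain. Each $A_\alpha$ has fewer than $\kappa$ elements, each with countable support, so a closure argument yields a club of limit ordinals $\xi$ with two properties: every element of $A_\xi$ depends only on coordinates below $\xi$, and every finite cylinder on coordinates below $\xi$ lies in $A_\xi$. Choose $\xi_n$ increasing in this club and let $\xi=\sup_n\xi_n$. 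Then $A_\xi=\bigcup_n A_{\xi_n}$ is contained in $\bigcup_n\mathcal{R}_{\xi_n}$ and contains every $x_n$. Hence $\{x_n\mid n<\omega\}$ is a maximal antichain of $A_\xi$, and $A_\xi\not\sqsubseteq A_\beta$ as soon as $1-\bigvee_n y_n\in A_\beta$.

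The paper's one-line proof has the same hole, since a union of $W$-closed sets along a sequence of cofinality $<\delta$ need not be $W$-closed. What your argument, and the paper's, actually proves is the version without continuity, for instance with $A_\alpha=B\cap M_{\alpha+1}$, assuming $|B|\le\kappa$ and $\nu^{<\mu}<\kappa$ for all $\nu<\kappa$.
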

\begin{proof}
	
We can assume that $|B|=\kappa$, for if $|B|<\kappa$ the result trivial. Let $\delta=\operatorname{Sat}{B}$. Consider a function 
$$W:[B]^{< \delta}\longrightarrow B^+,$$
such that for any set $D \in [B]^{< \delta}$, $W(D)$ is disjoint with every element of $D$, whenever such element exists. We will say that a set $E \subseteq B$ is \emph{closed} under $W$, if for any $D \in [E]^{< \delta}$

$$W(D) \in E.$$

By the regularity of $\kappa$, we can decompose $B$ into an increasing chain of sub-algebras closed under $W$. It is easy to see that every such sub-algebra must be regular.
\end{proof}

\section{The collapsing algebra is not in $\BoolSeq$}

Answering a question of Adam Barto\v{s}, we prove that collapsing algebra $\coll$ is not in the class $\BoolSeq_\kappa$ for any uncountable regular $\kappa$. In fact, for complete Boolean algebras this follows immediately from Corollary 16.10 in \cite{jech}, as any algebra in $\BoolSeq_\kappa$ has to satisfy the $\kappa$-chain condition, whilst $\coll$ doesn't.

We provide an independent proof not relying on the theory of finite support iterations.

\begin{thm}[\cite{jech}] \label{lastthm}
    Assume that $\kappa$ is a cardinal of uncountable cofinality, and we are given a sequence of Boolean Algebras $\{B_\alpha \mid \alpha \le \kappa \}$, satisfying:
\begin{enumerate}
    \item for all $\alpha<\beta\le\kappa$, $B_\alpha$ is a regular sub-algebra of $B_\beta$,
    \item for all $\alpha<\kappa$, $\dens(B_\alpha)<\kappa$,
    \item for each $\beta \in \acc(\kappa)$, the union $\displaystyle\bigcup_{\alpha<\beta}B_\alpha$ is dense in $B_\beta$.
\end{enumerate}
Then $B_\kappa$ is not isomorphic to $\collkappa$.
\end{thm}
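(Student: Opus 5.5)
Assume toward a contradiction that $B_\kappa \simeq \collkappa$. We produce an antichain of size $\kappa$ in $B_\kappa$ and then use a $\Delta$-system/pressing-down argument, exploiting the uncountable cofinality of $\kappa$, to show that such an antichain cannot exist under hypotheses 1.--3. In other words, we reprove directly that $B_\kappa$ has the $\kappa$-chain condition.

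\textbf{The antichain.} Since $B_\kappa$ forces $|\kappa|=\omega$, there is a name $\dot f\colon\omega\to\kappa$ that is forced to be onto. For each $\xi<\kappa$ pick $n_\xi$ and a nonzero $a_\xi\in B_\kappa$ with $a_\xi\Vdash \dot f(n_\xi)=\xi$. Passing to a subset of size $\kappa$ (this uses $\operatorname{cf}\kappa>\omega$), we may fix $n_\xi=n$. Then $\{a_\xi\mid\xi\in X\}$ is an antichain of size $\kappa$, since distinct $\xi$ force distinct values of $\dot f(n)$. Using hypothesis 3 at $\beta=\kappa$ (I will treat $\kappa$ itself as covered by hypothesis 3, or else use density of $\bigcup_{\alpha<\kappa}B_\alpha$ in $B_\kappa$ if this has to be argued from regularity), shrink each $a_\xi$ to a nonzero element $b_\xi\in B_{\alpha_\xi}$ with $\alpha_\xi<\kappa$.

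\textbf{Reduction to a stationary set.} Split into cases on $\operatorname{cf}\kappa$. If $\operatorname{cf}\kappa<\kappa$, choose a cofinal sequence $\langle\gamma_i\mid i<\operatorname{cf}\kappa\rangle$. Some $B_{\gamma_i}$ then contains more than $\dens(B_{\gamma_i})$ many pairwise disjoint $b_\xi$, which is impossible: an antichain of nonzero elements injects into a dense set by choosing a dense element below each member. This settles the singular case by pigeonhole, since $\kappa$ is a union of fewer than $\kappa$ sets $\{\xi\mid\alpha_\xi<\gamma_i\}$, each of size $<\kappa$ if the density bound holds. In the regular case, the map $\xi\mapsto\alpha_\xi$ is bounded on no set of size $\kappa$, by the same density bound. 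So on a club $C$ of limit points $\beta$ we have $\alpha_\xi<\beta$ for $\xi<\beta$. For $\beta\in C$, project $b_\beta$ onto the regular subalgebra $\overline{B_\beta}$, or more concretely take a nonzero $r\in\bigcup_{\alpha<\beta}B_\alpha$ such that every nonzero $c\le r$ in that union is compatible with $b_\beta$. Such an $r$, a ``reduction'' of $b_\beta$, exists because $B_\beta\sqsubseteq B_{\alpha_\beta}$ and the union is dense in $B_\beta$. Set $g(\beta)=$ the least $\alpha<\beta$ with $r_\beta\in B_\alpha$. This $g$ is regressive on the stationary set $C\cap\operatorname{cof}(\omega)$.

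\textbf{Pressing down.} By Fodor's lemma, $g$ is constant, equal to $\alpha^*$, on a stationary set $S$. Then $\{r_\beta\mid\beta\in S\}\subseteq B_{\alpha^*}$ has size $\kappa$ but lies in an algebra of density $<\kappa$. Hence, refining again via dense sets, there are $\beta<\beta'$ in $S$ with $r_{\beta'}$ and $r_\beta$ compatible. In fact we can do better: pick a dense $D\subseteq B_{\alpha^*}$ of size $<\kappa$, choose $d_\beta\le r_\beta$ in $D$, and find $\beta<\beta'$ with $d_\beta=d_{\beta'}=d$. Now $b_\beta\in B_{\alpha_\beta}\subseteq B_{\beta'}$ lies below the stage $\beta'$, and $d\le r_{\beta'}$. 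I will arrange the reduction so that compatibility with $b_{\beta'}$ holds not just for $d$ but for $d\wedge b_\beta$. To do this, choose $d_\beta$ below $b_\beta$'s own reduction in a suitable sense; the cleanest route is to choose the reduction property as ``every nonzero $c\in B_{\beta'}$ below $r_{\beta'}$ is compatible with $b_{\beta'}$'', which regularity of $B_{\beta'}$ in $B_\kappa$ provides. So if $d\wedge b_\beta\neq0$, which is arranged by taking $d\le b_\beta$ (choose $r_\beta\le b_\beta$ where possible, e.g.\ since $b_\beta\in B_{\alpha_\beta}$ for limit-stage $\beta$ we may take $r_\beta$ as a dense element below $b_\beta$ itself), then $b_\beta\wedge b_{\beta'}\neq0$, contradicting the antichain property.

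\textbf{Main obstacle.} The delicate point is matching the two roles of $r_\beta$: it must sit below $b_\beta$, and it must be a reduction of $b_{\beta'}$ for the later index. I expect to resolve this by a single lemma proved first: for $A\sqsubseteq A'$ with $A$ dense in $\overline A$, every nonzero $b\in A'$ has a nonzero $r\in A$ such that every nonzero $c\le r$ in $A$ meets $b$. Its proof goes through the maximal antichain definition of regularity: if no such $r$ exists, then the elements of $A$ disjoint from $b$ are dense in $A$, so a maximal antichain of them is maximal in $A'$ yet misses $b$. With this lemma in place, the pressing-down step is applied to the pair $(\text{reduction of }b_\beta\text{ to }\bigcup_{\alpha<\beta}B_\alpha,\ b_\beta)$, and the final incompatibility contradiction goes through.
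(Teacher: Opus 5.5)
\textbf{The gap is the singular case, and it cannot be repaired.} There you cover $\kappa$ by the sets $\{\xi\mid\alpha_\xi<\gamma_i\}$, $i<\operatorname{cf}\kappa$, each of size at most $\dens(B_{\gamma_i})<\kappa$, and treat this as a contradiction. But a singular $\kappa$ is by definition a union of $\operatorname{cf}\kappa<\kappa$ sets of size $<\kappa$. So nothing forces any single $B_{\gamma_i}$ to contain more than $\dens(B_{\gamma_i})$ of the $b_\xi$.

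In fact the statement is false for singular $\kappa$ with $\mu=\operatorname{cf}\kappa>\omega$. Take an increasing sequence of cardinals $\langle\kappa_i\mid i<\mu\rangle$ cofinal in $\kappa$ with $\kappa_0>\mu$. Put $B_\alpha=\bigoplus\{\col(\omega,\kappa_i)\mid\kappa_i<\alpha\}$ for $\alpha<\kappa$, and $B_\kappa=\overline{\bigoplus_{i<\mu}\col(\omega,\kappa_i)}$. Then:
\begin{itemize}
\item free factors are regular, so hypothesis 1 holds;
\item $\dens(B_\alpha)<\kappa$ for $\alpha<\kappa$;
\item $B_\beta=\bigcup_{\alpha<\beta}B_\alpha$ exactly for limit $\beta<\kappa$;
\item $\bigcup_{\alpha<\kappa}B_\alpha$ is dense in $B_\kappa$;
\item $B_\kappa$ has density $\kappa$ and makes every $\kappa_i$ countable, hence also $\mu$ and $\kappa$.
\end{itemize}
So $B_\kappa\simeq\collkappa$ by Theorem~\ref{thm1}. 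The result must therefore be read for regular uncountable $\kappa$, as the introduction to this section says. The paper's proof also covers only that case: a $\delta$ with $\kappa\cap M_\delta=\delta$ cannot exist for singular $\kappa$, since $M_\delta$ contains $\operatorname{cf}\kappa$ and a cofinal map $\operatorname{cf}\kappa\to\kappa$.

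\textbf{The regular case is correct and takes a genuinely different route.} The paper fixes a dense copy $\phi$ of ${}^{<\omega}\kappa$. It then uses a continuous chain of elementary submodels to find $\delta$ where ${}^{<\omega}\delta$ is captured densely in $B_\delta$. Via $B_\delta\sqsubseteq B_\kappa$ this gives a maximal antichain of $B_\kappa$ that misses $\phi(\langle s\rangle)$ for $s\ge\delta$. You instead prove directly that $B_\kappa$ is $\kappa$-c.c., using reductions to $\bigcup_{\alpha<\beta}B_\alpha$ and Fodor's lemma. This is the usual direct-limit chain-condition argument, made self-contained. It is more general, since it excludes every algebra with an antichain of size $\kappa$, and it needs no representation of $\collkappa$.

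Two points need cleaning up.
\begin{itemize}
\item Do not try to arrange $d\le b_\beta$ in the last step: in general $b_\beta\notin\bigcup_{\alpha<\beta}B_\alpha$. It is also unnecessary. Since $\alpha^*<\beta$, $d$ is a nonzero element of $\bigcup_{\alpha<\beta}B_\alpha$ below $r_\beta$, so $d\wedge b_\beta\neq0$ by the reduction property of $r_\beta$. As $\alpha^*,\alpha_\beta<\beta'$, the element $d\wedge b_\beta$ is a nonzero member of $\bigcup_{\alpha<\beta'}B_\alpha$ below $r_{\beta'}$, so it meets $b_{\beta'}$, which is the contradiction.
\item Density of $\bigcup_{\alpha<\kappa}B_\alpha$ in $B_\kappa$ cannot be derived from regularity. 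Take $B_\alpha=\{0,1\}$ for $\alpha<\kappa$ and $B_\kappa=\collkappa$. So this density has to be read into hypothesis 3, as you tentatively do. The paper's argument needs it too: the $\phi(t)$ need not lie in $B_\delta$, and must be replaced by elements of $\bigcup_{\alpha<\delta}B_\alpha$ below them before appealing to $B_\delta\sqsubseteq B_\kappa$.
\end{itemize}
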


\begin{proof}
    We know that $\collkappa$ can be represented as the completion of the $\kappa$-splitting tree of the height $\omega$. Therefore it is sufficient to show that there is no dense embedding of the tree $\kappatree$ into $B_\kappa^+$. Towards contradiction, let us assume that 
    $$\phi: \kappatree \longrightarrow B_\kappa^+$$
    is such embedding. Consider a continuous $\in$-chain $\{M_\alpha \mid \alpha<\kappa\}$ of elementary submodels of $\operatorname{H}(\theta)$, for sufficiently large regular cardinal $\theta$, containing $\phi,\kappa, \{B_\alpha \mid \alpha<\kappa\}$, satisfying moreover:
    \begin{enumerate}
        \item[(1')] for each $\alpha<\kappa$, $|M_\alpha|=|\alpha|+\omega$,
        \item[(2')] for each $\alpha<\kappa$, $\alpha\s M_\alpha$.
    \end{enumerate}

    By the Clause (3), the function $\alpha \mapsto \dens (B_\alpha)$ is continuous, so we can find $\delta\in \acc(\kappa)$ such that
    \begin{enumerate}
        \item[(1'')] $\kappa \cap M_\delta = \delta$,
        \item[(2'')] $\sup(\{\alpha<\delta \mid \dens(B_\alpha)=|\alpha|\})=\delta.$
    \end{enumerate}

   We aim to show that $B_\delta \cap M_\delta$ is dense in $B_\delta$. By item (2'') we can find an arbitrarily large $\alpha<\delta$ such that $B_\alpha^+$ has a dense subset $D_\alpha$ of size $|\alpha|$. Be elementarity, we can choose $D_\alpha$ from the model $M_\delta$, and given that $|\alpha|+1 \s M_\delta$, we conclude that $D_\alpha \s M_\delta$. Since $\alpha$ was arbitrary, it follows that the union $\displaystyle\bigcup_{\alpha<\delta}B_\alpha \cap M_\delta$ is dense in $\displaystyle\bigcup_{\alpha<\delta}B_\alpha$, and so in $B_\delta$.

   We claim that $\phi[^{<\omega}\delta]$ is dense in $B_\delta$. To see this, pick any non-zero $p\in B_\delta$. We can find non-zero $q\le p$ in $B_\delta\cap M_\delta$.  By elementarity
    $$M_\delta\models \exists\; t \in \kappatree \quad \phi(t) \le q.$$
    Therefore we can find $t \in \kappatree$ witnessing $\phi(t)\le p$ inside $\kappatree \cap M_\delta = ^{<\omega}\delta$. Let $A\s ^{<\omega}\delta$ be a maximal antichain. Since $\phi$ is a dense embedding, also $\phi[A]$ is maximal in $B_\delta$, and therefore in $B_\kappa$. However, this clearly can't be the case, given that for any $s \in \kappa \setminus \delta$, the  one-element sequence $\langle s \rangle \in \kappatree$ is incompatible with any element of $A$, and for this reason $\phi[A] \cup \{ \phi(\langle s \rangle)\}$ is an antichain extending $\phi[A]$, contrary to the maximality of $\phi[A]$. This shows that $\phi$ can't be a dense embedding, and concludes the proof.
\end{proof}

\section{The open question}
Theorem \ref{mainthm} essentially states that $\coll$ is homogeneous for the class $\Bool_\kappa$. It is natural to conjecture that $\coll$ itself is a (generalized?) Fra\"iss\'e limit of some sort. By Theorem \ref{lastthm} it cannot be a Fra\"iss\'e limit of the length $\kappa$ of any class of Boolean algebras.

\begin{prob}
    Represent the collapsing algebra $\coll$ as a Fra\"iss\'e limit.
\end{prob}

\section*{Acknowledgements}
The author was supported by the GA\v{C}R project EXPRO 20-31529X and RVO: 67985840.

\end{document}